\newtheorem{theorem}{Theorem}
\numberwithin{theorem}{section}
\newtheorem{lemma}{Lemma}
\numberwithin{lemma}{section}
\newtheorem{corollary}{Corollary}
\numberwithin{corollary}{section}
\newtheorem{question}{Question}
\numberwithin{question}{section}
\theoremstyle{definition}
\newtheorem{definition}{Definition}
\numberwithin{definition}{section}
\theoremstyle{remark}
\newtheorem{remark}{Remark}
\newcommand{\mb}{\mathbb}
\newcommand{\tb}{\textbf}
\newcommand{\R}{\mathbb{R}}
\newcommand{\se}{\subseteq}
\newcommand{\mc}{\mathcal}
\newcommand{\eps}{\varepsilon}
\renewcommand{\vec}{\overrightarrow}
\title{Balanced configurations of points in the plane}
\author{Laura Pierson \\ Harvard University \\ \href{mailto:lcpierson73@gmail.com}{lcpierson73@gmail.com} \and Julian Wellman \\ Massachusetts Institute of Technology \\ \href{mailto:wellman@mit.edu}{wellman@mit.edu}}
\date{}
\begin{document}

\maketitle

\begin{abstract}
A \emph{balanced configuration} of points on the sphere $S^2$ is a (finite) set of points which are in equilibrium if they act on each other according any force law dependent only on the distance between two points. The configuration is additionally \emph{group-balanced} if for each point in a configuration $\mc{C}$, there is a symmetry of $\mc{C}$ fixing only that point and its antipode. Leech showed that these definitions are equivalent on the sphere $S^2$ by classifying all possible balanced configurations. On the other hand, Cohn, Elkies, Kumar, and Sch\"urmann showed that for $n\ge 7,$ there are examples of balanced configurations in $S^{n-1}$ which are not group balanced. They also suggested extending the notion of balanced configurations to Euclidean space, and conjectured that at least in the case of the plane, all discrete balanced configurations in $\R^n$ are group-balanced. We verify a reformulation of this conjecture by providing a complete classification of the balanced configurations in $\R^2$ satisfying a certain minimal distance property.
\end{abstract}

\section{Introduction}

\subsection{Balanced configurations on the sphere}

Consider a set of points acting on each other according to some force law, so that each point exerts a force on each other point. A natural question that arises in physics is what the equilibrium configurations are under a certain force law, i.e. how the points can be arranged so that there is no net force on any point. In a physical context, this question is related to energy minimizing configurations, since any configuration which minimizes the total energy must be in equilibrium. The question of arranging points on a sphere to minimize the total energy under a usual inverse square force law was first posed by Thomson \cite{Thomson} in the context of his model of the atom. Abstracting this, Cohn and Kumar \cite{CK} asked in what ways a set of points can be arranged to simultaneously minimize the total energy under \emph{every} force law where the force exerted by one point on another depends only on the distance between them and decreases with distance, and Ballinger, Blekherman, Cohn, Giansiracusa, Kelly, and Sch\"urmann \cite{BBCGKS} studied this question experimentally.

One can also ask more generally which configurations of points will be in equilibrium under an arbitrary force law dependent only on distance, without regard to whether the total energy is minimized. This question was first posed by Leech \cite{Leech} in the case of points on a sphere. He defined the following notion of \emph{balanced configurations}:

\begin{definition}
[Leech~\cite{Leech}]
\label{def:bal_sphere}
Let $S^{n-1}$ denote the unit sphere in $\R^n$ and $O$ the origin. A finite configuration $\mc{C}\se S^{n-1}$ is \emph{balanced} if for any $P\in\mc{C}$ and $d\ge 0$, the points $P_1,\dots,P_m\in \mc{C}$ at distance $d$ from $P$ have a vector sum $\vec{OP_1}+\dots+\vec{OP_m}$ which is a scalar multiple of $\vec{OP}.$
\end{definition}

Leech then classified all finite balanced subsets of the sphere. His classification can be stated as follows:

\begin{theorem}
[Leech~\cite{Leech}]
\label{thm:leech_classification}
Each tiling of the sphere with regular polygons can be turned into a balanced configuration by taking any one, two or all three of the following sets:
\begin{itemize}
    \item The vertices of the tiles,
    \item The midpoints of the edges of the tiles,
    \item The centers of the faces of the tiles.
\end{itemize}
Moreover, any nonempty, finite balanced configuration of points on the sphere arises in this manner.
\end{theorem}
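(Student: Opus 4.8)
The plan is to prove the two halves of the statement separately: the forward direction, that every union of the three distinguished point-sets of a regular spherical tiling is balanced, and the converse, that these are the only finite balanced configurations. The forward direction is a clean symmetry argument, while essentially all of the difficulty lives in the converse.

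For the forward direction I would argue via stabilizers. Let $\mc T$ be a tiling of $S^2$ by regular polygons and $G\le O(3)$ its symmetry group, acting transitively on vertices, on edges and on faces. The key observation is that each distinguished point $P$ (a vertex, an edge-midpoint, or a face-center) admits a nontrivial rotation about the axis $OP$ lying in $G$: a rotation of order equal to the vertex degree or the face size at vertices and face-centers, and a $180^\circ$ edge-flip at edge-midpoints (in the degenerate dihedral and hosohedral tilings the relevant rotation is again a $180^\circ$ flip about $OP$). Any such rotation fixes only the line $\R\cdot\vec{OP}$ inside $\R^3$. Now fix one of the configurations $\mc C$ under consideration (a union of orbit types) together with a point $P\in\mc C$. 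Because $G$ preserves distances and preserves $\mc C$, the set $S_d(P)$ of points of $\mc C$ at distance $d$ from $P$ is invariant under the axis-rotation at $P$; hence its vector sum $\sum_{Q\in S_d(P)}\vec{OQ}$ is fixed by that rotation and so is a multiple of $\vec{OP}$. This is exactly the condition of Definition~\ref{def:bal_sphere}, and taking unions of the three orbit types does not affect the argument since the same rotations still act. In other words, each such $\mc C$ is \emph{group-balanced}, and group-balanced configurations are automatically balanced.

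For the converse I would reduce ``balanced'' to ``group-balanced'' and then classify. Granting group-balance, the classification is largely mechanical: the full symmetry group $G=\tn{Sym}(\mc C)$ is a finite subgroup of $O(3)$, hence cyclic, dihedral, or one of the tetrahedral, octahedral and icosahedral groups (with their reflection extensions); $\mc C$ is a union of $G$-orbits; and since group-balance forces every point of $\mc C$ to lie on a rotation axis of $G$, each orbit sits exactly on the $k$-fold axes, $2$-fold axes, or face-axes of $G$ --- that is, at the vertices, edge-midpoints, or face-centers of the associated regular (possibly degenerate) tiling. Matching each group and each admissible orbit to a tiling then reproduces Leech's list.

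The real obstacle is the implication balanced $\Rightarrow$ group-balanced. Balance only asserts that, for each $P$ and each $d$, the tangential projections onto the tangent plane at $P$ of the points of $S_d(P)$ sum to zero; this is much weaker than invariance under an axis-rotation, and indeed, as the abstract records, the implication genuinely fails once $n\ge 7$. To manufacture an honest symmetry I would argue locally and extremally: study the nearest-neighbor shell $N(P)$, whose tangential projections are equal-length vectors summing to zero, and then feed the balance relations centered at those neighbors back into the relations at $P$, bootstrapping to pin down the angular positions on each distance-circle around $P$. I expect this to require a finite case analysis that exploits the low dimension --- the same rigidity of $\R^3$ that makes the finite subgroups of $O(3)$ classifiable is what forces the balance relations to upgrade to an actual rotational symmetry, whereas the extra rotational freedom in higher dimensions is precisely what breaks the conclusion. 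Carrying out this rigidity step, rather than the subsequent orbit bookkeeping, is where I anticipate spending the bulk of the effort.
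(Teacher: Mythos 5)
First, note that the paper does not prove this statement at all: Theorem~\ref{thm:leech_classification} is quoted from Leech and used as background, so there is no in-paper proof to compare against. Judged on its own terms, your forward direction is fine --- the observation that each distinguished point admits a nontrivial rotation in the tiling's symmetry group fixing only its axis, so that each distance shell is rotation-invariant and its vector sum lies along $\vec{OP}$, is exactly the ``group-balanced implies balanced'' argument the paper itself records in the introduction, and your handling of the degenerate dihedral tilings via $180^\circ$ flips is correct.

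The converse, however, is where the entire content of the theorem lives, and you have not proven it: you state a plan (``study the nearest-neighbor shell \dots bootstrapping \dots I expect this to require a finite case analysis \dots where I anticipate spending the bulk of the effort'') but carry out none of it. That is a genuine gap, not a compressed proof. Moreover, the route you propose --- first establish that balanced implies group-balanced on $S^2$, then classify group-balanced configurations via the finite subgroups of $O(3)$ --- inverts the logical order of the actual argument. Leech's method, which this paper explicitly adapts to the plane in Section~\ref{sec:main_proof}, is a direct geometric case analysis: bound the number of nearest neighbors of a point (at most $6$ on the sphere, by an angle/packing argument), fix two adjacent points with $m$ and $n$ nearest neighbors respectively, and exhaustively eliminate or rigidify each pair $(m,n)$ using the balance conditions at successive distances. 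The implication ``balanced $\Rightarrow$ group-balanced'' on $S^2$ is then read off \emph{a posteriori} from the resulting list; it is a corollary of the classification, not a lemma available to prove it. Establishing that implication directly, as you propose, would require essentially the same case analysis you are deferring, so the proposal as written defers precisely the step that constitutes the proof.
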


In the case of the sphere, there is a tiling with regular polygons corresponding to each of the five Platonic solids, as well as for each $n$ a tiling splitting the sphere into two regular $n$-gons meeting along some great circle. Thus, each balanced configuration of the sphere is either some combination of the vertices, edge midpoints, and face centers of a regular polyhedron, or a set of $n$ evenly spaced points along a great circle, possibly with the two antipodal points added.

In \cite{cohn2010}, Cohn, Elkies, Kumar, and Sch\"urman introduced the notion of \emph{group-balanced} subsets of a sphere:

\begin{definition}
[\cite{cohn2010}]
\label{def:gp_bal_sphere}
A configuration $\mc{C}\se S^{n-1}$ is \emph{group-balanced} if for any $P\in\mc{C}$, there is a symmetry of $\mc{C}$ fixing no points of $S^{n-1}$ except $P$ and its antipode.
\end{definition}

Any group-balanced configuration must also be balanced. To see this, suppose $g$ is an isometry of the sphere which is a symmetry of $\mc{C}$ fixing no points except $P$. Then if $P_1,\dots,P_m$ are the neighbors of $P$ at some distance $d$, then $g(P_1),\dots,g(P_m)$ is some permutation of $P_1,\dots,P_m$, so $\vec{OP_1}+\dots+\vec{OP_m}$ is fixed by $g$, and must therefore be a scalar multiple of $\vec{OP}.$ It follows from Leech's classification that any balanced subset $\mc{C}$ of $S^2$ is group-balanced, since there is a nontrivial rotational symmetry about every point. However, this is not always the case in higher dimensions, as shown in \cite{cohn2010} via constructions of point configurations in $S^{n-1}$ for $n\ge 7$ which are balanced but not group-balanced. The question of whether or not all balanced configurations on the sphere are group-balanced for $n=4,5,$ and 6 is still open.

\subsection{Balanced configurations in Euclidean space}

The authors of \cite{cohn2010} introduced the following notions of balanced and group-balanced configurations of points in Euclidean space, analogous to Definitions \ref{def:bal_sphere} and \ref{def:gp_bal_sphere}:

\begin{definition}
[\cite{cohn2010}]
A configuration $\mc{C}\se \R^n$ is \emph{balanced} if for any point $P\in\mc{C}$ and any distance $d$, the points $P_1,P_2,\dots,P_m\in\mc{C}$ at distance $d$ from $P$ satisfy $\vec{PP_1}+\dots+\vec{PP_m}=\vec{0}.$
\end{definition}

\begin{remark}\label{rem:alternate_def}
One could alternately state Definition \ref{def:bal_sphere} as saying that for any $P\in\mc{C}$ and $d\ge 0$, if $P_1,\dots,P_m$ are the points of $\mc{C}$ at distance $d$ from $P$, and $P_1',\dots,P_m'$ their projections onto the hyperplane tangent to the sphere at $P$, then $\vec{PP_1'}+\dots+\vec{PP_m'}=\vec{0}$. This highlights the analogy between the two definitions.
\end{remark}

\begin{definition}
[\cite{cohn2010}]
A configuration $\mc{C}\se\R^n$ is \emph{group-balanced} if for any $P\in\mc{C}$, there is a symmetry of $\mc{C}$ fixing no points of $\R^n$ except $P$.
\end{definition}

By a similar argument to the spherical case, all group-balanced configurations are balanced. The authors of \cite{cohn2010} conjectured that, like in the spherical case, all discrete balanced configurations in $\R^2$ are group-balanced \cite[Conjecture 5.1]{cohn2010}, but that for sufficiently large $n$ this is not always the case in $\R^n$ \cite[Conjecture 5.2]{cohn2010}. The condition that the configuration be \emph{discrete} is intended to correspond to the condition implicitly assumed by Leech~\cite{Leech} that the configuration is finite on the sphere. We will need to use a slightly stronger assumption, which we believe also includes all the configurations of interest.

\begin{definition}\label{def:mindist}
A configuration $\mc C$ has the \emph{minimal distance property} if there is some distance $d>0$ such that for all distinct $x,y\in \mc C$, the distance between $x$ and $y$ is at least $d$, and there is a pair of points $x,y\in \mc C$ which are separated by a distance of exactly $d$. 
\end{definition}


This property excludes configurations such as the entire plane or an entire line which would otherwise be balanced, as well as stranger configurations like the set of rational points in the plane. We believe that all discrete balanced configurations satisfy the minimal distance property, but do not have a proof. Another natural analogue of the finiteness condition on spheres could be that the configuration is periodic with a finite fundamental domain containing finitely many points. Any such periodic configuration certainly satisfies the minimal distance property, but we will just assume the weaker minimal distance property.


\begin{theorem}\label{thm:classification}
A tiling of the plane with regular hexagons or a tiling with congruent parallelograms whose vertices form a lattice can be turned into a balanced configuration of points in $\R^2$ by taking one, two, or all three of the following sets:
\begin{itemize}
    \item The vertices of the tiles,
    \item The midpoints of the edges of the tiles,
    \item The centers of the faces of the tiles.
\end{itemize} Moreover, all balanced configurations of points in the plane that satisfy the minimal distance property and are not contained within a single line arise in this manner.
\end{theorem}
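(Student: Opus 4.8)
The statement has two halves: that the listed point sets are balanced, and that they exhaust the balanced configurations with the minimal distance property that are not contained in a line. For the construction half, my plan is to avoid evaluating force sums directly and instead observe that each listed configuration is \emph{group-balanced}, which by the observation recorded earlier (all group-balanced configurations are balanced) suffices. The symmetry group of the regular hexagonal tiling contains, about each of its vertices, edge midpoints, and face centers, a nontrivial rotation fixing only that point: a $120^\circ$ rotation about a vertex, a $60^\circ$ rotation about a face center, and a $180^\circ$ rotation about each edge midpoint; likewise the symmetry group of a parallelogram tiling contains $180^\circ$ rotations about every vertex, edge midpoint, and face center. Since any union of the three orbits (vertices, edge midpoints, face centers) is preserved by the full symmetry group of the tiling, each point $P$ of such a union admits a symmetry fixing only $P$ — automatic, since a nontrivial planar rotation fixes exactly its center — so the configuration is group-balanced, hence balanced.

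For the (harder) classification half, let $d$ be the minimal distance and form the graph $G$ on $\mc C$ joining points at distance exactly $d$. The first step is a finiteness lemma: a nonempty balanced configuration cannot be finite, since if it were, at a vertex $P$ of its bounded convex hull all other points would lie in an open half-plane bounded by a supporting line meeting the hull only at $P$, so the nearest nonempty distance shell at $P$ would have a nonzero vector sum, contradicting balance. Hence $\mc C$ is infinite and unbounded. The second step is the local analysis of the link of a point $P$ having neighbors $P_1,\dots,P_m$ at distance $d$: balance gives $\sum \vec{PP_i}=\vec 0$, while the minimal distance property forces the unit vectors $\vec{PP_i}/d$ to be pairwise at angle $\geq 60^\circ$. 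A short enumeration then shows $m\le 6$ and that the only possibilities are an antipodal pair ($m=2$), an equilateral triple ($m=3$), two antipodal pairs ($m=4$, using the elementary fact that four unit vectors summing to zero split into antipodal pairs), a regular pentagon ($m=5$), or a regular hexagon ($m=6$).

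The heart of the proof, and the step I expect to be the main obstacle, is promoting this local data to global rigidity. The plan is: (i) rule out the pentagonal link, whose incipient $5$-fold symmetry must be incompatible with balance together with a minimal distance — I expect this to require a genuinely global argument using balance at several distance shells, since local $5$-fold symmetry alone is compatible with minimal distance in quasiperiodic sets, so minimal distance is not enough by itself; (ii) show a hexagonal link forces its neighbors to have hexagonal links (for a neighbor $Q$, two of $P$'s hexagon vertices already lie at distance $d$ from $Q$, and balance at $Q$ then forces the remaining three), so that any component of $G$ meeting a hexagonal link is a full triangular lattice, and classify analogously the components arising from the other link types; and (iii) assemble the components into a single periodic structure. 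Step (iii) is delicate precisely because $G$ can be disconnected: in the $m=2$ case $G$ is a union of parallel evenly spaced lines, and the two-dimensional lattice structure is invisible at distance $d$ and emerges only from balance at the next distance. Thus one must use balance at distances larger than $d$, together with the minimal distance property, both to knit such one-dimensional pieces into a rank-two lattice of translational periods and to forbid mismatched or aperiodic gluings of the lattice-like pieces.

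Once $\mc C$ is shown to be invariant under a rank-two lattice of translations with finitely many points per fundamental domain, the remainder is bookkeeping: a periodic balanced configuration with the minimal distance property not contained in a line must be the vertex set, the edge-midpoint set, or the face-center set — or a union of these — of either the regular hexagonal tiling or a parallelogram tiling, matching the claimed list (with the triangular and square cases absorbed into the parallelogram family). I expect the excluded pentagon of step (i) and the global assembly of step (iii) to absorb essentially all the difficulty, while steps (ii) and the final identification are comparatively mechanical.
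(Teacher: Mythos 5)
Your first half (the listed configurations are group-balanced, hence balanced) is exactly the paper's argument, and your local link analysis is the right starting point --- it matches the paper's Lemma \ref{lem:leq6} and the $60^\circ$ separation argument. But there are two genuine problems with the classification half. First, your link enumeration contains a false claim: five unit vectors summing to zero with pairwise angles at least $60^\circ$ do \emph{not} form a regular pentagon. Modulo rotation this is a two-parameter family (five angles, minus one for rotation, minus two for the vector equation), and the regular pentagon is an interior point of it since its gaps of $72^\circ$ leave slack in the $60^\circ$ constraints; the most one can extract locally is that consecutive gaps lie in $[60^\circ,90^\circ)$, which is precisely the paper's Lemma \ref{lem:60-90}. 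The same issue, in milder form, affects $m=4$: the two antipodal pairs need not be orthogonal, which is why the $(4,4)$ case yields a whole family of rhombic lattices rather than the square lattice. So your premise that a $5$-neighbor point carries ``incipient $5$-fold symmetry'' is wrong, and ruling out $5$-neighbor points requires controlling this full two-parameter family of links --- the paper does this through separate multi-step arguments for each of $(5,5)$, $(5,4)$, $(5,3)$, $(5,2)$, each invoking balance at a second, larger distance shell.

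Second, and more fundamentally, the steps you flag as ``the main obstacle'' --- ruling out the pentagonal link, and assembling components of $G$ when $G$ is disconnected --- are the proof, and you have not supplied them. The paper's route is not ``establish a rank-two lattice of periods, then do bookkeeping''; periodicity is never an intermediate step. Instead it runs a case analysis on the pair $(m,n)$ of neighbor counts of two adjacent points, and the interactions between points with \emph{different} neighbor counts (cases $(4,3)$, $(4,2)$, $(3,2)$, $(5,2)$, $\dots$) each require bespoke geometric estimates at several distance shells; these do not reduce to classifying links of single points. The hardest piece, your step (iii), is the paper's $(2,2)$ case: there one takes $d^*=\inf\{d(R,L):R\in\mc C\setminus L\}$ over points off the line $L$ of $2$-neighbor points, shows via balance at distance $PR$ that near-minimizers force reflected copies and hence whole new lines at distance exactly $d^*$, and iterates to build the lattice. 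Nothing in your proposal substitutes for this; ``one must use balance at distances larger than $d$ \dots to knit such one-dimensional pieces into a rank-two lattice'' correctly names the difficulty but is not an argument. Likewise the closing claim that a periodic balanced configuration with the minimal distance property must be on the list is itself the content of the theorem, not bookkeeping.
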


These configurations must all fall under one of the six types shown in Figure \ref{fig:all_balanced_configs}. For instance, the configuration consisting of the vertices and face centers of a lattice parallelogram tiling might seem to be a separate case, but is actually a lattice and so falls under Type 2.

\begin{figure}[H]
    \centering
    \includegraphics[width=13cm]{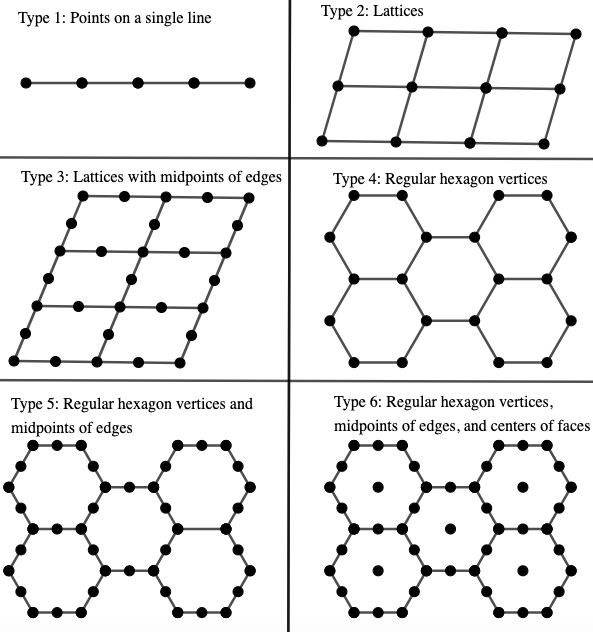}
    \caption{The six types of discrete balanced configurations of points in the plane.}
    \label{fig:all_balanced_configs}
\end{figure}

In each of these configurations there is a nontrivial rotational symmetry about every point, so they are all group-balanced as well. This corollary verifies a slightly weaker version of \cite[Conjecture 5.1]{cohn2010}:

\begin{corollary}
All balanced configurations in $\R^2$ satisfying the minimal distance property are group-balanced.
\end{corollary}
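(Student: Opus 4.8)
The plan is to deduce the corollary directly from the classification in Theorem~\ref{thm:classification}, after first disposing of the collinear configurations that the theorem explicitly excludes. Recall that group-balance requires, for each $P\in\mc C$, a symmetry of $\mc C$ fixing only $P$; since a nontrivial rotation of $\R^2$ about $P$ has $P$ as its unique fixed point, it suffices in every case to exhibit a nontrivial rotational symmetry of $\mc C$ whose center is the given point $P$.

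First I would treat the case where $\mc C$ lies in a single line $\ell$. Identifying $\ell$ with $\R$, the only points of $\ell$ at distance $d>0$ from a point $P$ are $P\pm d$, and their displacement vectors from $P$ cancel exactly when both belong to $\mc C$. Hence balancedness forces $P+d\in\mc C \iff P-d\in\mc C$ for every $d$, i.e. $\mc C$ is invariant under the point reflection through $P$. This point reflection is precisely the rotation of $\R^2$ by $\pi$ about $P$, which fixes only $P$, so every collinear balanced configuration is already group-balanced. (In fact the minimal distance property then forces $\mc C$ to be a full one-dimensional lattice, but this stronger statement is not needed here.)

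For the remaining case I would invoke Theorem~\ref{thm:classification} to write $\mc C$ as one, two, or all three of the vertex, edge-midpoint, and face-center sets of a tiling $\mc T$ by regular hexagons or by lattice parallelograms, and then supply the required rotation point by point. The governing observation is that each such tiling admits a nontrivial rotational symmetry about each of its vertices, edge midpoints, and face centers, and that any symmetry of $\mc T$ permutes each of the three distinguished point sets among themselves; hence a single rotation centered at $P$ serves simultaneously for whichever combination of the three sets happens to make up $\mc C$. Concretely, for a lattice-parallelogram tiling one uses the $\pi$-rotation about the relevant vertex, edge midpoint, or face center; for a regular-hexagon tiling one uses the order-$3$ rotation about a vertex, the $\pi$-rotation about an edge midpoint, and the order-$6$ rotation about a face center. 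Since every point of $\mc C$ is one of these three types of special point of $\mc T$, this covers all of $\mc C$.

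The bulk of the work is carried by Theorem~\ref{thm:classification}; the only genuine content remaining is the finite case check above. Its one mild subtlety is that a generic parallelogram tile has no rotational symmetry beyond order $2$, so one must rely on $\pi$-rotations rather than higher-order ones and verify directly that, for each type of center, the chosen $\pi$-rotation stabilizes all three of the vertex, edge-midpoint, and face-center sets at once. I expect this compatibility verification to be the main (though entirely routine) obstacle, since it is exactly the step at which a combination configuration such as Type~2 could in principle fail to inherit a usable symmetry.
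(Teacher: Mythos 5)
Your proposal is correct and follows essentially the same route as the paper: the corollary is obtained by combining Theorem~\ref{thm:classification} with the observation that each configuration in the classification (including the evenly spaced points on a line, which appear in the $(2,2)$ case) admits a nontrivial rotational symmetry about every one of its points. Your direct argument for the collinear case and the explicit check that the chosen rotations stabilize each of the vertex, edge-midpoint, and face-center sets are slightly more detailed than what the paper writes, but they are the same idea.
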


We will prove our main result, Theorem \ref{thm:classification} in Section \ref{sec:main_proof}. Then in Section \ref{sec:hyperbolic} we discuss the possibility of generalizing to the case of balanced configurations in the hyperbolic plane.

\section{Proof of the classification theorem}

\subsection{Setup and preliminary lemmas}\label{sec:main_proof}


\noindent\tb{Notation.} The balanced configurations we consider have the minimal distance property, so we can rescale them so that the minimal distance is exactly 1. Let $\mc{C}$ be a balanced configuration with minimal distance 1, and call two points in $\mc{C}$ \emph{neighbors} if they are at distance 1 from each other, and \emph{neighbors at distance $d$} if they are distance $d$ apart. Given two neighboring points $P$ and $Q$, we will consider the case where $P$ has $m$ neighbors and $Q$ has $n$ neighbors, for each pair $(m, n)$. We will refer to this case as $(m,n)$. Following the notation of Leech, we will let the neighbors of $P$ be $P_1,\dots,P_m=Q$ in counterclockwise order, and the neighbors of $Q$ be $Q_1,\dots,Q_n=P$ in counterclockwise order. We will also assume that $PQ$ is horizontal, with $P$ to the left of $Q$.\\

Following the approach used by Leech~\cite{Leech}, we will consider every case $(m,n)$ and determine what balanced configuration(s) can arise from each case, if any. It suffices to only consider the cases where $m \geq n$, and the following lemma implies that $m$ and $n$ must both be at most 6, so we have only finitely many cases to handle.

\begin{lemma}\label{lem:leq6}
Every point has at most 6 neighbors.
\end{lemma}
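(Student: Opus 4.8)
The plan is to prove the statement using a packing argument: each neighbor of $P$ lies on the unit circle centered at $P$, so I want to bound how many such points can simultaneously maintain pairwise distance at least $1$ from each other (by the minimal distance property). The key geometric observation is that if two neighbors $P_i$ and $P_j$ of $P$ are at distance at least $1$ from each other while both at distance exactly $1$ from $P$, then the angle $\angle P_i P P_j$ is at least $60^\circ$, since the triangle $P P_i P_j$ has $|PP_i| = |PP_j| = 1$ and $|P_iP_j| \ge 1$. The angles subtended at $P$ by consecutive neighbors (in the counterclockwise order $P_1, \dots, P_m$) sum to $360^\circ$, so if each of these $m$ angles is at least $60^\circ$, we get $m \cdot 60^\circ \le 360^\circ$, giving $m \le 6$.

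First I would set up the configuration locally: fix a point $P \in \mc{C}$ with neighbors $P_1, \dots, P_m$, all lying on the unit circle about $P$. Next I would verify the angle bound via the law of cosines (or elementary triangle geometry): for any two distinct neighbors, $|P_iP_j|^2 = 2 - 2\cos(\angle P_i P P_j) \ge 1$ forces $\cos(\angle P_i P P_j) \le 1/2$, hence the angle is at least $60^\circ$. Then I would order the neighbors by angle around $P$ and sum the $m$ consecutive central angles, which total exactly $360^\circ$, to conclude $m \le 6$.

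The main subtlety — rather than a genuine obstacle — is the boundary behavior at exactly $60^\circ$: the bound $m \le 6$ is achieved only when all six consecutive angles equal $60^\circ$ and all neighbors are at distance exactly $1$ from each other, forcing a regular hexagonal arrangement. This requires care that the inequalities are correctly non-strict, but it does not threaten the bound itself; indeed this tightness is exactly what feeds into the later case analysis. I do not expect to need the full force of the balanced condition here — only the minimal distance property and elementary planar geometry — so the proof should be short and self-contained.
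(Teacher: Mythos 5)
Your proposal is correct and matches the paper's proof exactly: both use the minimal distance property to force each consecutive central angle at $P$ to be at least $60^\circ$, then sum the $m$ angles to $360^\circ$ to conclude $m \le 6$. The only difference is that you spell out the law-of-cosines computation, which the paper leaves implicit.
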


\begin{proof}
Suppose $P$ has $m$ neighbors $P_1,\dots,P_m$ at distance 1. Then $\angle P_iPP_{i+1} \geq 60^\circ$ for every $i$, because otherwise $P_iP_{i+1} < 1$. Thus, we get a sequence of $m$ angles whose sum is $360^\circ$ and each is at least $60^\circ$. Therefore $m\leq 6$.
\end{proof}

For the cases where $m=5$, we will use the following lemma, which is analogous to one Leech uses:

\begin{lemma}[cf. Leech~{\cite[Fact 2.9]{Leech}}] \label{lem:60-90}
If a point has exactly 5 neighbors, then the angles between adjacent neighbors are at least $60^\circ$ and strictly less than $90^\circ.$
\end{lemma}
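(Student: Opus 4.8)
The lower bound is immediate from the minimal distance property, by exactly the argument of Lemma~\ref{lem:leq6}: if two adjacent neighbors $P_i, P_{i+1}$ subtended an angle less than $60^\circ$ at $P$, then since $PP_i = PP_{i+1} = 1$ we would have $P_iP_{i+1} < 1$, contradicting minimality. So the real content is the upper bound, and this is where the balanced condition must be used.

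The plan is to argue the upper bound by contradiction, using the fact that the five unit vectors $u_i := \vec{PP_i}$ sum to $\vec 0$ (this is the balanced condition at $P$ applied to its distance-$1$ neighbors). Suppose some angle is $\ge 90^\circ$; after cyclically relabeling I may assume it is the angle $\theta$ between $u_5$ and $u_1$. I would then choose coordinates with this gap bisected by the positive $x$-axis, so that $u_1$ and $u_5$ sit at angles $\pm\theta/2$ and $u_1 + u_5 = (2\cos(\theta/2),0)$. From $\sum_i u_i = \vec 0$ one gets $u_2 + u_3 + u_4 = -(u_1 + u_5)$, a purely horizontal vector whose length is $2\cos(\theta/2)$. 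Since $\theta \ge 90^\circ$, this length is at most $2\cos 45^\circ = \sqrt 2$.

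The key step is then to lower-bound that same length using only the constraint that every gap is at least $60^\circ$. Writing $\phi_2, \phi_3, \phi_4$ for the angular positions of $u_2, u_3, u_4$, the gap conditions (together with $\theta/2 \ge 45^\circ$) force $\phi_2 \ge \theta/2 + 60^\circ \ge 105^\circ$, $\phi_4 \le 360^\circ - \theta/2 - 60^\circ \le 255^\circ$, and $\phi_3 \in [165^\circ, 195^\circ]$, so that the middle vector $u_3$ lies within $15^\circ$ of being antipodal to the gap. Bounding each cosine on its forced range gives $\cos\phi_2 + \cos\phi_3 + \cos\phi_4 \le \cos 105^\circ + \cos 165^\circ + \cos 255^\circ \approx -1.48 < -\sqrt 2$. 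Because $u_2 + u_3 + u_4$ is horizontal, its length equals $|\cos\phi_2 + \cos\phi_3 + \cos\phi_4| > \sqrt 2$, contradicting the bound of $\sqrt 2$ from the previous paragraph.

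I expect the main obstacle to be getting the two length estimates to actually cross, since the margin is narrow (about $1.48$ versus $1.41$): a cruder estimate that merely places $u_2, u_3, u_4$ in the left half-plane would not close the argument. What makes it work is choosing the coordinate frame that bisects the large gap, so that the horizontal contribution $u_1 + u_5$ must be cancelled entirely by $u_2 + u_3 + u_4$, combined with the observation that the $60^\circ$ spacing pins $u_3$ to within $15^\circ$ of the antipode of the gap, forcing $\cos\phi_3$ close to $-1$. Verifying the single numerical inequality $\cos 105^\circ + \cos 165^\circ + \cos 255^\circ < -\sqrt 2$ is then the one concrete computation the proof rests on.
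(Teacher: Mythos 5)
Your proof is correct and takes essentially the same approach as the paper: both bisect the large gap, use the $60^\circ$ spacing to pin down the angular positions of the remaining three neighbors, and reduce to the single numerical inequality $2\cos 45^\circ + 2\cos 105^\circ + \cos 165^\circ < 0$, which is exactly your $\cos 105^\circ + \cos 165^\circ + \cos 255^\circ < -\sqrt{2}$ rearranged. The only difference is organizational (you move $u_1+u_5$ to the other side of the balance equation), and your explicit interval analysis for $\phi_2,\phi_3,\phi_4$ is if anything slightly more careful than the paper's ``moving any point left only decreases the sum.''
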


\begin{proof}
Suppose $P$ has 5 neighbors $P_1,P_2,P_3,P_4,$ and $P_5.$ The fact that the angles are at least $60^\circ$ is immediate, since otherwise one of the distances between consecutive neighbors would be less than 1. Now suppose for contradiction $\angle P_1PP_5 \ge 90^\circ,$ and consider the components of the vectors $\vec{PP_1},\dots,\vec{PP_5}$ in the direction of the angle bisector of $\angle P_1PP_5$, shown to be horizontal in Figure \ref{fig:60-90_lemma}.

\begin{figure}[H]
    \centering
    \includegraphics[width=4cm]{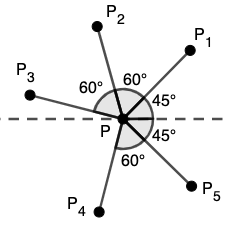}
    \caption{Showing that if $P$ has 5 neighbors, the angles between consecutive neighbors are less than $90^\circ.$}
    \label{fig:60-90_lemma}
\end{figure}

If we try to place $P_2,P_3,$ and $P_4$ as far to the right as possible, as shown in Figure \ref{fig:60-90_lemma}, then the sum of the horizontal components is $$2\cos 45^\circ + 2\cos 105^\circ + \cos 165^\circ \approx = -0.07 < 0.$$ Moving any of these points to the left will only make this sum smaller, so it can never be 0, and thus the configuration cannot be balanced if $\angle P_1PP_5\ge 90^\circ.$
\end{proof}

We will now proceed through the cases in reverse lexicographic order. A summary of all the cases we will consider and the corresponding balanced configurations is shown in Table \ref{tab:cases_list}.

\begin{table}[H]
    \centering
    \begin{tabular}{c|c}
        \tb{Case} & \tb{Balanced configurations} \\
        \hline
        $(6,6)$ & Vertices from an edge-to-edge equilateral triangle tiling \\
        \hline
        $(6,n)$ for $n<6$ & None \\
        \hline
        $(5,n)$ for $n\le 5$ & None \\
        \hline
        $(4,4)$ & Points of a lattice \\
        \hline
        $(4,3)$ & None \\
        \hline
        $(4,2)$ & Vertices and edge midpoints from a lattice parallelogram tiling \\
        \hline
        $(3,3)$ & Vertices from a regular hexagon tiling \\
        \hline
        $(3,2)$ & Vertices and edge midpoints from a regular hexagon tiling \\
        & Vertices, edge midpoints, and face centers from a regular hexagon tiling \\
        \hline
        $(2,2)$ & A single line of evenly spaced points \\
        & Points of a lattice \\
        & Vertices and edge midpoints from a lattice parallelogram tiling \\
    \end{tabular}
    \caption{Table of cases and the corresponding balanced configurations.}
    \label{tab:cases_list}
\end{table}





\subsection{Case \texorpdfstring{$(6,6)$}{(6,6)}}

We will show that the case $(6,n)$ is impossible for all $n<6$, so all the neighbors of $P$ and $Q$ must also have 6 neighbors. Similarly these neighbors must have 6 neighbors, and so on. Since the position of the first neighbor determines the positions of the other 5 exactly for each vertex, all points in the configuration connected to $P$ through a sequence of neighbors have their positions determined, if the case is possible at all. The equilateral triangle tiling achieves this case, so once $P$ and $Q$ are chosen we know they must be adjacent vertices in an equilateral triangle tiling. There is no room for additional vertices that do not have any neighbors in the tiling, so this is the only possible balanced configuration for the $(6,6)$ case.

\subsection{Case \texorpdfstring{$(6,n)$}{(6,n)} for \texorpdfstring{$n<6$}{n<6}}

In the case $n=6$ in the proof of Lemma~\ref{lem:leq6}, the neighbors of $P$ must be at perfect $60^\circ$ angles from each other, as shown below, since all 6 angles are at least $60^\circ$ and add up to only $360^\circ$.
\begin{figure}[H]
    \centering
    \includegraphics[width=8cm]{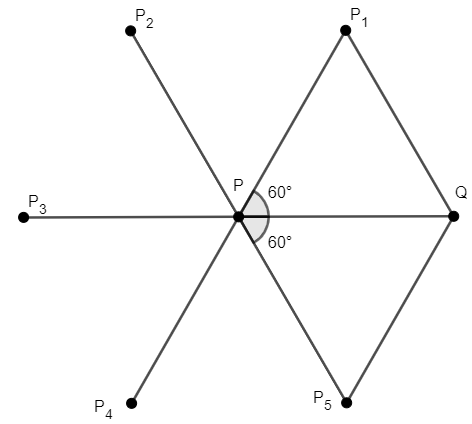}
    \caption{Showing that the case $(6,n)$ is impossible for $n<6$.}
\end{figure}
With the positions of $P$ and $Q$ fixed, this determines the positions of $P_1,\ldots,P_5$ exactly. Then $P_1,P_5$ are neighbors of $Q$. The sum of the neighbors of $Q$ so far is $\vec{QP}+\vec{QP_1} + \vec{QP_2} = 2\vec{QP}$. But the only way the remaining neighbors of $Q$ can sum to $-2\vec{QP}$ is if $Q$ has 3 more neighbors on the other side, since if it had only two they would both have to equal $-\vec{QP}$ to sum to $-2\vec{QP}$. This forces $n=6$, so this case is impossible.

\subsection{Case \texorpdfstring{$(5,5)$}{(5,5)}} 

In this case, we must have $P_1=Q_4$, because by Lemma ~\ref{lem:60-90}, $\angle P_1PQ$ and $\angle PQQ_4$ are both less than $90^\circ$, and so if $P_1\ne Q_4$ then $P_1$ and $Q_4$ would be less than 1 unit apart. Similarly, $P_4=Q_1.$ This implies that triangles $\triangle P_1PQ$ and $\triangle P_4PQ$ are equilateral, as shown below, so $\vec{PP_1}+\vec{PQ}+\vec{PP_4}=2\vec{PQ}$.
\begin{figure}[H]
    \centering
    \includegraphics[width=8cm]{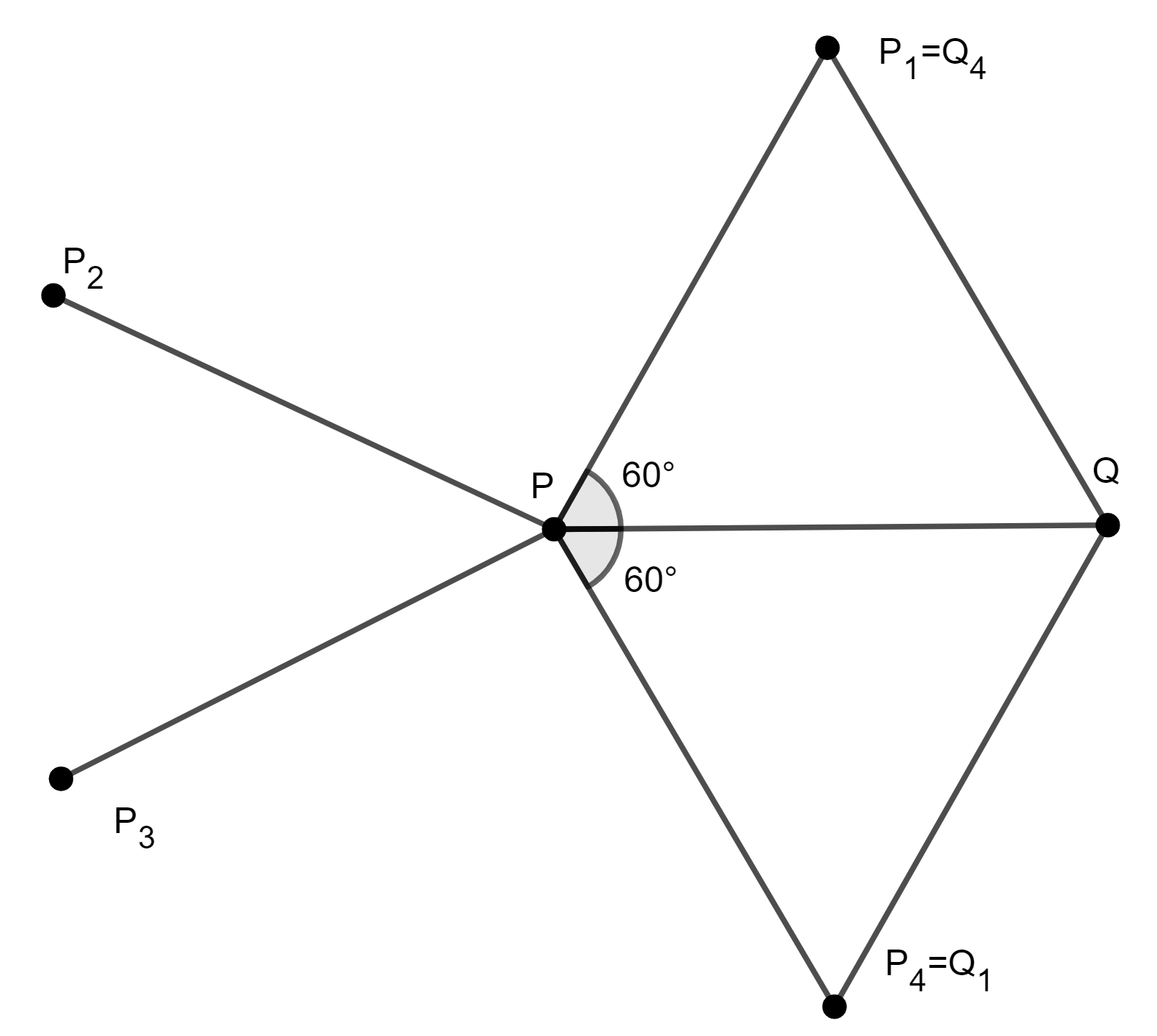}
    \caption{Showing that the case $(5,5)$ is impossible.}
\end{figure}
But there are only two remaining neighbors of $P$, which cannot sum to $-2\vec{PQ}$ because they cannot both be $-\vec{PQ}$. Thus, this case is impossible.

\subsection{Case \texorpdfstring{$(5,4)$}{(5,4)}}

The only way $Q$ can have exactly 4 neighbors is if $Q_3$ is the reflection of $Q_1$ about $Q$ and $Q_2$ is the reflection of $P$ about $Q$. This forces either $\angle PQQ_1< 90^\circ$ or $\angle PQQ_3< 90^\circ$, so assume $\angle PQQ_1< 90^\circ$. Then we must have $P_4=Q_1$, since otherwise $P_4$ and $Q_1$ would be too close together. Thus, $\triangle PQQ_1$ must be equilateral, so $\angle PQQ_1=60^\circ$ and $\angle PQQ_3=120^\circ$. Also, $\angle QQ_3Q_2 = 60^\circ$, so $Q_3$ has at least 4 neighbors (since if it had only 3 they would all have to be at angles of $120^\circ$).

Suppose for contradiction that $Q_3$ has 5 neighbors, and let $R$ be the next neighbor clockwise from $Q$, so $\angle QQ_3R < 90^\circ$. If we try to make $P_1$ and $R$ as far apart as possible, we would have $$\angle QPP_1 = \angle QQ_3R=90^\circ \implies \angle PQP_1=\angle Q_3QR = 45^\circ.$$ This gives the picture shown in Figure \ref{fig:54}. We find that $\triangle P_1QR$ is an isosceles triangle with $P_1Q=RQ=\sqrt{2}$ and $\angle P_1QR = 30^\circ,$ so the base angles are $75^\circ$ and thus $$P_1R = 2\sqrt{2}\sin 15^\circ \approx 0.73 < 1.$$ Thus, $Q_3$ cannot have 5 neighbors. But then $Q_3$ has exactly 4 neighbors, and so does $Q$, so this reduces to the case $(4,4)$, and we will later show that in that case, no point can have 5 neighbors. Thus, the case $(5,4)$ is impossible.

\begin{figure}[H]
    \centering
    \includegraphics[width=9cm]{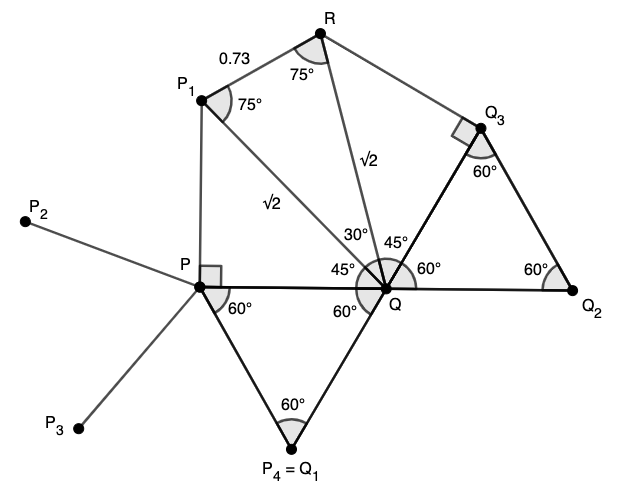}
    \caption{Case $(5,4)$ when $R$ and $P_1$ are as far apart as possible ($RP_1 \approx 0.73$).}
    \label{fig:54}
\end{figure}

\subsection{Case \texorpdfstring{$(5,3)$}{(5,3)}}

Since $Q$ has exactly 3 neighbors, we must have $\angle PQQ_1 = \angle Q_1QQ_2 = \angle Q_2QP = 120^\circ.$ If $P_1$ has 2, 4, or 5 neighbors, then this would reduce to one of the cases $(5,2)$, $(5,4)$, or $(5,5)$, all of which we will show to be impossible. Thus, we can assume $P_1$ has exactly 3 neighbors. Let $R$ be the next neighbor of $P_1$ counterclockwise from $P$. We claim that $RQ_2<1$. To make $R$ and $Q_2$ as far apart as possible, we would set $\angle P_1PQ = 90^\circ$ (since by Lemma~\ref{lem:60-90} it is at most $90^\circ$). Then we get the picture shown in Figure \ref{fig:53}.

\begin{figure}[H]
    \centering
    \includegraphics[width=7cm]{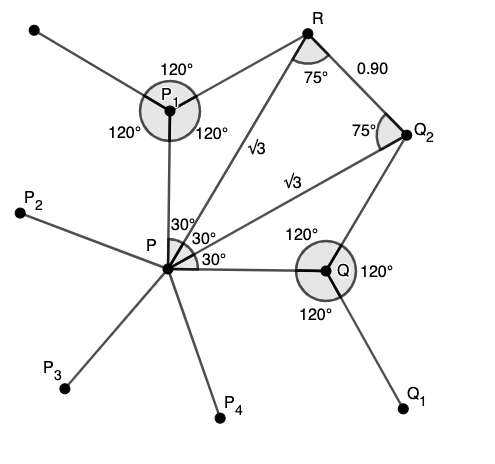}
    \caption{Case $(5,3)$ when $R$ and $Q_2$ are as far apart as possible ($RQ_2 \approx 0.90$).}
    \label{fig:53}
\end{figure}

Since $\angle QPQ_2 = \angle P_1PR = 30^\circ$ and $\angle P_1PQ=90^\circ$, we get $\angle RPQ_1 = 30^\circ$. Then $\triangle PQ_2R$ is isosceles with $PQ_2=PR=\sqrt{3}$ and $\angle PQ_2R = \angle PRQ_2=75^\circ,$ so $$Q_2R = 2\sqrt{3}\sin 15^\circ \approx 0.90 < 1.$$ This gives a contradiction, so the case $(5,3)$ is impossible.

\subsection{Case \texorpdfstring{$(5,2)$}{(5,2)}}

Since $Q$ has only two neighbors, we must have $QP_1,QP_4>1$. Now consider the neighbors of $Q$ at distance $QP_1$; we claim that $Q$ must have exactly 2 or 4 neighbors at this distance. 

If $Q$ had 3 neighbors at distance $QP_1$, then the next neighbor $R$ clockwise would be too close to $Q_1$, since the left part of Figure \ref{fig:52_1} shows the extreme case where $\angle P_1PQ = 90^\circ,$ and in that case $RQ_1\approx 0.52.$
\begin{figure}[H]
    \centering
    \includegraphics[width=12cm]{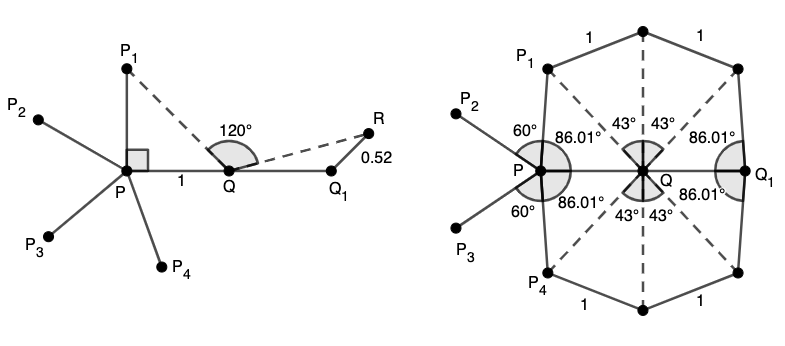}
    \caption{Showing that $Q$ cannot have 3, 5, or 6 neighbors at distance $QP_1$ in the case $(5,2).$}
    \label{fig:52_1}
\end{figure}
Next, suppose $Q$ has 5 neighbors at distance $QP_1.$ Since $\angle P_1PQ, \angle P_4PQ < 90^\circ$ by Lemma \ref{lem:60-90}, there is just barely space for 3 neighbors at this distance on either side of the line, so there cannot be more than 6 neighbors at this distance. There also cannot be exactly 5, because then there would be 3 points on one side and 2 on the other, and the sum of the vertical components on the side with 3 points would be larger even if we make the vertical components as small as possible on that side and as large as possible on the other. Finally, if there were exactly 6 such neighbors, then if we try to make $\angle P_1PQ$ and $\angle QPP_4$ as small as possible subject to the conditions that all the neighbors are at distance at least 1 from each other and from $Q$, we get $\angle P_1PQ = \angle QPP_4 \approx 86.01^\circ$ as shown in Figure \ref{fig:52_1}. But then the configuration will not be balanced around $P$ at distance 1, since even if we make $\vec{PP_2}$ and $\vec{PP_3}$ as close as possible to vertical (i.e. at $60^\circ$ angles to $\vec{PP_1}$ and $\vec{PP_4}$), the sum of the horizontal components of the vectors at that distance 1 from $P$ would be $$2\cos 146.01^\circ + 2\cos 86.01^\circ+1\approx-0.52<0,$$ and moving any of $P_1,P_2,P_3,$ and $P_4$ further left would only make that sum more negative, meaning there is no way for the sum to be 0.

Thus, $Q$ has exactly 2 or 4 neighbors at distance $QP_1$ and also at distance $QP_4$ by the same argument, so the reflections $P_1'$ and $P_4'$ of $P_1$ and $P_4$ about $Q$ must be in $\mc{C}$. Then $P_1'$ and $P_4'$ are both neighbors of $Q_1$ by symmetry, which forces $Q_1$ to have exactly 5 neighbors, since $\angle P_1'QP_4'<180^\circ$ by Lemma \ref{lem:60-90}, implying that it cannot have 2, 3, or 4 neighbors, and if it had 6 neighbors we would be in the case $(6,2).$

Next, we claim that $P_1,P_1',P_4,P_4'$ can each only have 2 neighbors at distance 1. In order to make the components in the direction of the angle bisector of $\angle P_1PP_4$ cancel while also having $\angle P_1P_2, \angle P_4PP_1 \ge 60^\circ,$ we must have $\angle P_1PP_4 \le 154^\circ$, since then these components would sum to at most $$2\cos 137^\circ +2\cos 77^\circ + 1\approx -0.01 < 0,$$ and making $\angle P_1PP_4$ larger can only make that sum more negative.

Now consider the balance about $P_1$ at distance $P_1P_4'.$ There must be some neighbor $R$ at this distance which is on or below line $P_1P_4'$, but there is no room for such a point without it being too close to one of $Q,P,P_4,P_3$, or $P_2.$ To see this, Figure \ref{fig:52_2} shows the extreme case where $R$ is as far as possible from $P_2$ (while also not being too close to any of the other points), which we obtain by setting $\angle P_1PQ=90^\circ$ and $\angle P_2PQ=\angle P_1PP_4=154^\circ,$ and $\angle P_4'P_1R=180^\circ.$ In that case we get $RP_2\approx 0.94 < 1,$ as shown, which implies that the case $(5,2)$ is impossible.

\begin{figure}[H]
    \centering
    \includegraphics[width=8cm]{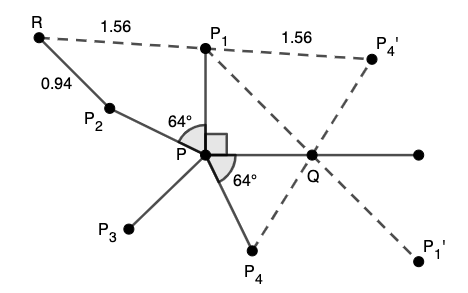}
    \caption{Showing that $P_1$ cannot be balanced at distance $P_1P_4'$ in the case $(5,2).$}
    \label{fig:52_2}
\end{figure}

\subsection{Case \texorpdfstring{$(4,4)$}{(4,4)}} 

In this case, $P_1,P,$ and $P_3$ are collinear, and so are $Q_1, Q$, and $Q_3.$ Then we must have $P_1P_3 \parallel Q_1Q_3,$ since otherwise we would get either $P_1Q_3<1$ or $P_3Q_1<1.$ This implies that $P_1Q_3 = P_3Q_1 = PQ = 1$, so $P_1$ and $Q_3$ are neighbors, as well as $P_3$ and $Q_1$. 

If angle $\alpha = 60^\circ$, then $P_1Q = 1$, which would put us in case $(6,6)$ instead. Then angles $\angle PP_1Q_3$ and $\angle QQ_3P_1$ are both not $120^\circ$, so $P_1$ and $Q_3$ must have more than three neighbors. Supposing angle $\alpha$ is acute, angle $\angle PP_1Q_3$ is obtuse (or otherwise $QQ_3P_1$ is obtuse and we use $Q_3$). By Lemma~\ref{lem:60-90}, this means that $P_1$ cannot have five neighbors. It cannot have six neighbors either, so it must have exactly four neighbors. Similarly, $Q_1$ has four neighbors, allowing us to extend our lattice-like shape to new vertices $A$ and $B$. 

\begin{figure}[H]\label{fig:44}
    \centering
    \includegraphics[width=13cm]{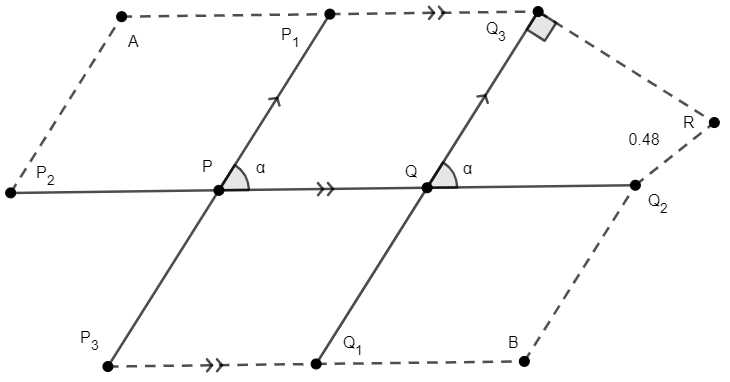}
    \caption{Case $(4,4)$, with extensions from $P_1,Q_1$ shown, and with hypothetical problematic edge from $Q_3$.}
\end{figure}

If $Q_3$ had five neighbors, then let $R$ be next one counterclockwise from $Q$, and we know from Lemma~\ref{lem:60-90} that $\angle QQ_3R < 90^{\circ}$. In Figure~\ref{fig:44} we see that the distance $Q_2R < 1$, so this is impossible and $Q_3$ must have only four neighbors. Then we can continue to extend our lattice indefinitely using this same reasoning at any pair of adjacent vertices that both are known to have four neighbors. Therefore all points of the form $P + a\vec{PQ} + b\vec{PP_1}$ for $a,b\in \mb Z$ are in the configuration. 

There is no room for any other points in the configuration without being within 1 unit of some other point. The only choice we had throughout this construction was in choosing the angle $\alpha$; any angle $60^\circ < \alpha < 90^\circ$ will work, resulting in tilings with any rhombus that is not too narrow.

\subsection{Case \texorpdfstring{$(4,3)$}{(4,3)}}

Since $P$ has exactly 4 neighbors, $P_1, P,$ and $P_3$ are collinear, so one of $\angle P_1PQ$ and $\angle QPP_3$ is acute. Assume $\angle P_1PQ < 90^\circ$, and consider the neighbors of $Q$ at distance $QP_1$.

First, we claim that it is impossible for $Q$ to have 5 or more neighbors at this distance. Suppose for contradiction it did, and let $R$ be the next neighbor of $Q$ clockwise from $P_1$ at distance $QP_1.$ We claim that $RQ_2<1.$ The extreme cases are when either $R$ is as far as possible to the left of $Q_2$ or as far as possible to the right of $Q_2.$ The first extreme would be when $P_1R = 1$ and $\angle P_1PQ = 90^\circ,$ and in this case we get $RQ_2 \approx 0.8 < 1$ (left half of Figure \ref{fig:43_1}).

The other extreme would be if we try to make $\angle P_1QR$ as large as possible (right half of Figure \ref{fig:43_1}). We claim that if $R$ is clockwise form $Q_1$, then $\angle Q_2QR\ge 120^\circ.$ To see this, if $\angle P_1QR < 120^\circ,$ then $\angle Q_2QR < \angle PQP_1,$ which implies $Q_2R < 1$, since if we compare $\triangle PP_1Q$ and $\triangle QQ_2R$, we have $QP=QQ_2$ and $QP_1=QR,$ so whichever of $\angle PQP_1$ or $\angle Q_2QR$ is larger will subtend the greater side, and $PP_1=1.$
\begin{figure}[H]\label{fig:43_1}
    \centering
    \includegraphics[width=13cm]{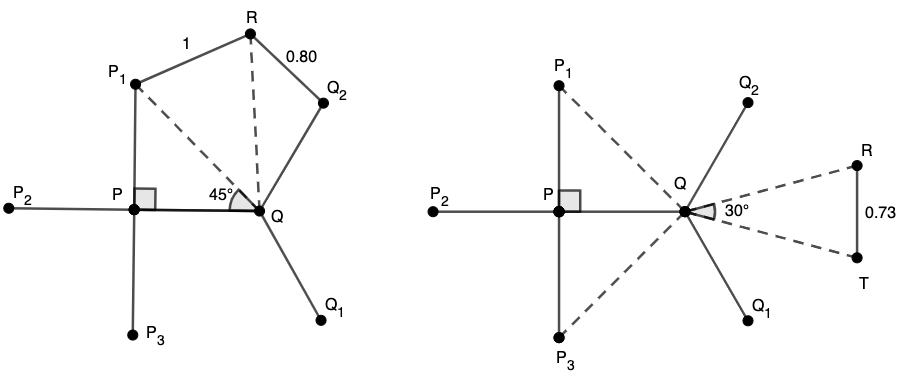}
    \caption{Showing that $Q$ cannot have 5 or more neighbors at distance $QP_1$ in the case $(4,3).$}
\end{figure}
Now let $S$ be the next neighbor of $Q$ counterclockwise from $P$ at distance $P_1Q$, followed by $T$. We must have $\angle PQS\ge \alpha,$ or else $PS < 1$, so $\angle P_1QS \ge 90^\circ.$ Then by the same reasoning as above, $\angle SQT > 120^\circ.$ But then $R$ and $T$ are too close together, since the greatest $RT$ can be is if $\angle P_1PQ = 45^\circ$ and $QP_1 = \sqrt{2}$, in which case $$RT = 2\sqrt{2}\sin 15^\circ \approx 0.73 < 1.$$ Thus, it is impossible for $Q$ to have 5 or more neighbors at distance $QP_1.$

Next, we claim that $Q$ cannot have exactly 2 or 4 neighbors at distance $QP_1$. If so, then the point $R$ which is the reflection of $P_1$ about $Q$ would have to be in $\mc{C}$, and $RQ_1<1$ because again $QR<\sqrt{2}$ and $\angle RQQ_1 < 15^\circ,$ as the most extreme case is when $\angle QPP_1 = 90^\circ,$ as shown in Figure \ref{fig:43_2}.
\begin{figure}[H]\label{fig:43_2}
    \centering
    \includegraphics[width=6cm]{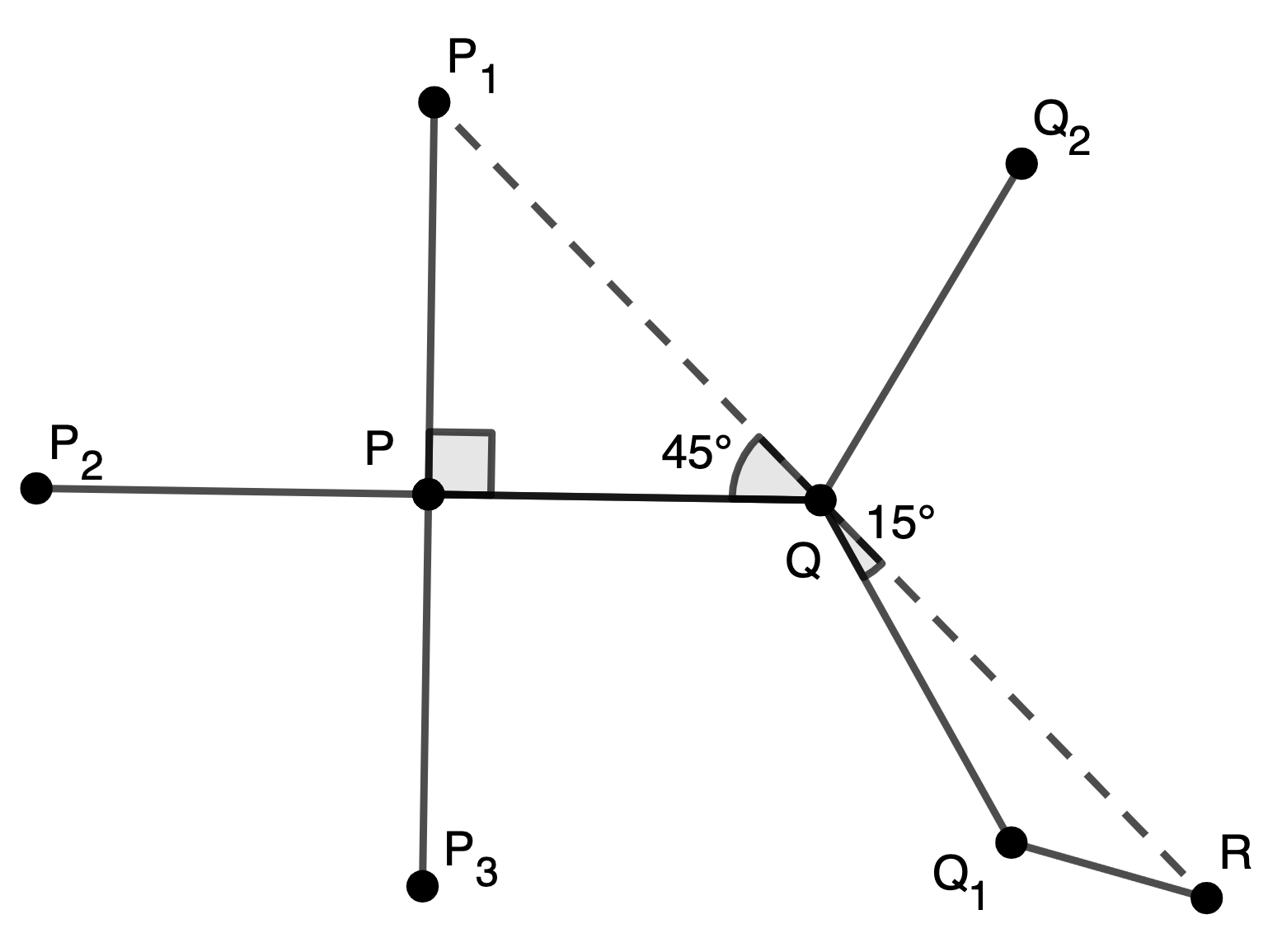}
    \caption{Showing that $Q$ cannot have 2 or 4 neighbors at distance $QP_1$ in the case $(4,3).$}
\end{figure}
The only remaining possibility is that $Q$ has 3 neighbors at distance $QP_1.$ Let $R$ be the next neighbor counterclockwise from $P_1$, so $\angle P_1QR = 120^\circ.$ We claim that $R$ is too close to $P_3,$ i.e. $P_3R<1.$ To show this, let $T$ be the point other than $Q$ on line $QR$ with $PT=1$, and let $\angle PP_1Q = \angle PQP_1 = \alpha$, as shown in Figure \ref{fig:43_3}.

\begin{figure}[H]\label{fig:43_3}
    \centering
    \includegraphics[width=8cm]{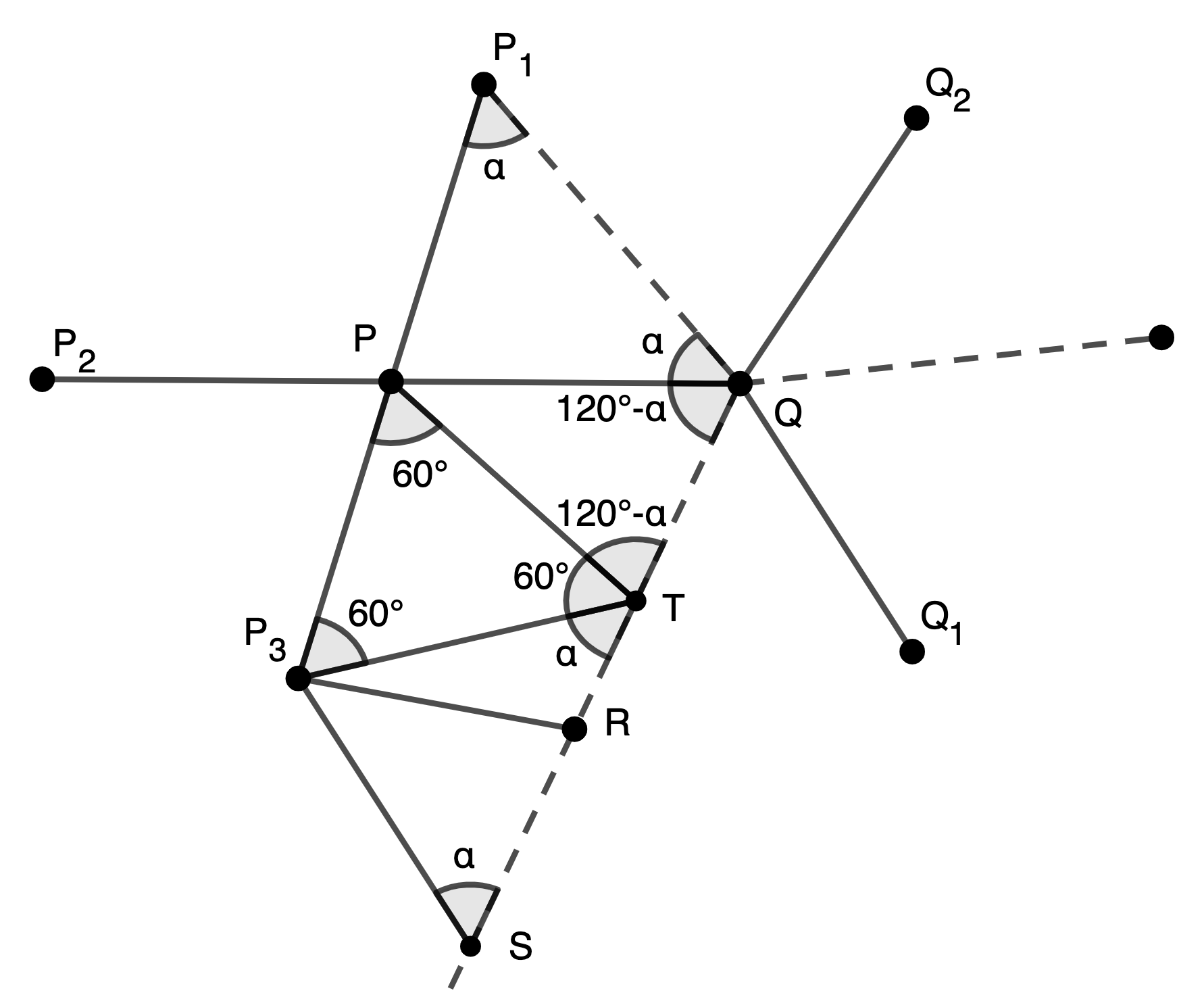}
    \caption{Case $(4,3)$ when $Q$ has 3 neighbors at distance $QP_1$}
\end{figure}

Then $\angle PQT = \angle PTQ = 120^\circ - \alpha$ since $\angle P_1QT = 120^\circ$ and $\triangle PQT$ is isosceles. But then for the angles of $PP_1QT$ to sum to $360^\circ$, we must have $\angle P_1PT = 120^\circ,$ so $\angle P_1PT = 60^\circ.$ Since $\triangle PP_3T$ is isosceles and has a $60^\circ$ angle, it must be equilateral, thus $P_3T = PP_3 = PT = 1.$ Let $S$ be the other point besides $T$ on line $QR$ with $P_3S = 1.$ Then since $\angle PTP_3 = 60^\circ$ and $\angle PTQ = 120^\circ - \alpha,$ we get $$\angle P_3ST = \angle P_3TS = \alpha = \triangle PP_1Q \cong \triangle P_3TS \implies P_1Q = ST.$$ Also, since $\angle P_1PQ > 60^\circ$ and $\angle TPP_1 = 120^\circ,$ $\angle PTQ < \angle QPP_1$ and thus $TQ < QP_1.$ Since $QR = QP_1,$ it follows that $R$ is between points $T$ and $Q$, which implies $P_3R < 1$, giving the desired contradiction.

This covers all possibilities for the case $(4,3)$, so this case is impossible.

\subsection{Case \texorpdfstring{$(4,2)$}{(4,2)}}

If $P$ has 4 neighbors, then $P_1$ and $P_3$ must be reflections of each other about $P$, and similarly $P_2$ and $Q$ are reflections of each other about $P$.

Now consider the neighbors of $Q$ at distance $QP_1$. We claim that it must have exactly 2 or 4 such neighbors. If $Q$ has exactly 3 neighbors at this distance, then the next neighbor $R$ clockwise from $P_1$ would be too close to $Q_1$, since in the most extreme case, $\angle PQP_1 = 45^\circ$, $P_1Q = QR =\sqrt{2}$, and $\angle RQQ_1 = 15^\circ$, which gives $Q_1R \approx 0.52< 1$, as shown in the left half of Figure \ref{fig:42_1}.

Now suppose $Q$ has at least 5 neighbors at distance $QP_1$ (second half of Figure \ref{fig:42_1}). First, we claim that it must have at most 6 such neighbors. We know that $\angle P_1QP \ge 45^\circ$. If $S$ and $T$ are the nearest to $Q_1$ of the neighbors of $Q$ at this distance, then we must also have $\angle SQQ_1, \angle TQQ_1 \ge \angle P_1QP \ge 45^\circ$ in order to have $SQ_1, Q_1 \ge 1.$ Then if $R$ is another neighbor between $P_1$ and $S$, we would need $\angle RQP_1, \angle RQS \ge 41.41^\circ$ (since this is the angle we get in the extreme case where $QR = QS = \sqrt{2}$. Thus, there is only space for one point $R$ between $P_1$ and $S$, so there can be at most 3 neighbors of $Q$ at distance $QP_1$ which are above line $PQ$, and by the same reasoning, there can also be at most 3 such neighbors below line $PQ$.

\begin{figure}[H]\label{fig:42_1}
    \centering
    \includegraphics[width=13cm]{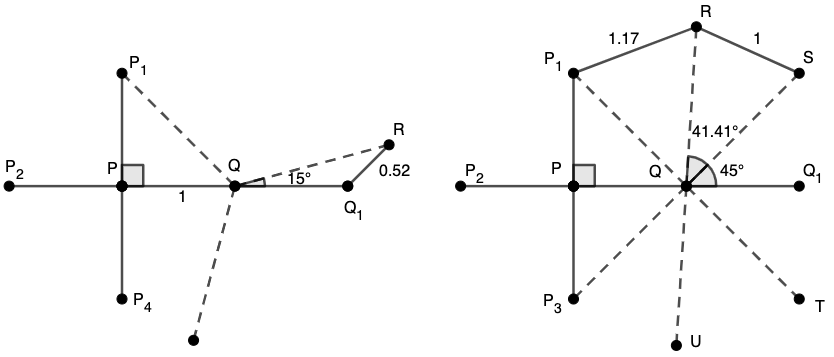}
    \caption{Showing that $Q$ must have 2 or 4 neighbors at distance $QP_1$ in the case $(4,2).$}
\end{figure}

Next, note that there is no way for $Q$ to have exactly 5 neighbors at distance $QP_1$, since it's impossible for the vertical components of the vectors to cancel: if we have 3 neighbors above the line and 2 below, say, then the vertical components of $\vec{QP}_1,$ $\vec{QR}$, and $\vec{QS}$ sum to more twice the length of $QP_1$, so there is no way the vertical components of the vectors below the line can cancel with them.

It follows that $Q$ must have exactly 6 neighbors at distance $QP_1$, assuming it has at least 5, but we claim that this is also impossible. The only way there is space for 3 such neighbors below line $PQ$ is if one of them is $P_3$, in which case we must have $QP_1 = QP_3$, so $\angle P_1PQ = \angle QPP_3 - 90^\circ.$ Now consider the balance of $P_1$ at distance $P_1R.$ First, note that we cannot have $P_1R = 1$: if so, we would get $90^\circ < \angle RP_1P < 120^\circ$, meaning $P_1$ cannot have 2, 3, 5, or 6 neighbors at distance 1 and thus must have exactly 4. But then the reflection of $P_1$ in $R$ would have to be one such neighbor, and this point would be less than 1 away from $P_2.$ If we try to make $P_1R$ as large as possible, we would take the extreme case where $RS=1$ and $\angle P_1QP = \angle SQQ_1 = 45^\circ$, which gives $P_1R \le 1.17.$ But then there is no space for another neighbor of $P_1$ at distance $P_1R$ which has smaller vertical coordinate than $P_1$, since any such point would be too close to at least one of $R, Q, P,$ or $P_2$, which means $P_1$ cannot be balanced at distance $P_1R$. This gives a contradiction and shows that $Q$ cannot have 5 or more neighbors at distance $QP_1.$

It follows that $Q$ has exactly 2 or 4 neighbors at distance $QP_1,$ so the reflection $P_1'$ of $P_1$ about $Q$ must be a point in $\mc{C}$. But then by symmetry, $P_1'Q=1$, so $P_1'$ is a neighbor of $Q_1$. Since $\angle QQ_1P_1'<90^\circ$, this rules out the possibility of $Q_1$ having 2 or 3 neighbors at distance 1, and we know from our work in the cases $(5,2)$ and $(6,2)$ that it cannot have 6 neighbors. Thus, $Q_1$ must have exactly 4 neighbors, so the reflection $P_3'$ of $P_3$ in $Q$ is also a neighbor (Figure \ref{fig:42_2}).
\begin{figure}[H]
    \centering
    \includegraphics[width=10cm]{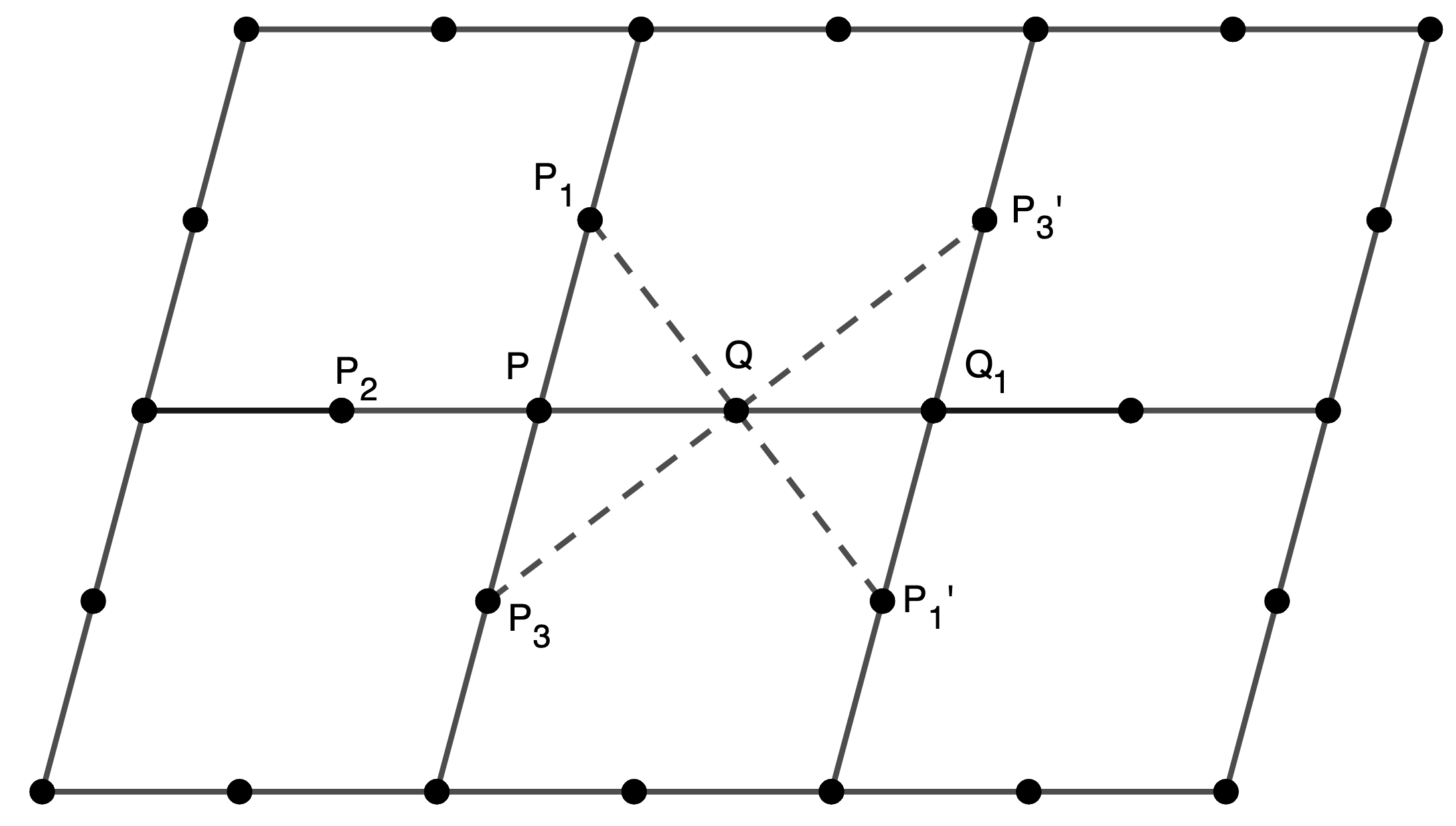}
    \caption{Resulting rhombus tiling for the case $(4,2).$}
    \label{fig:42_2}
\end{figure}
Now from our work in other cases, $(4,3)$ and $(5,4)$ are impossible, and $(4,4)$ implies that every point has exactly 4 neighbors, so the only way $(4,2)$ can happen is if all the neighbors of $P$ have exactly 2 neighbors. But then by the same argument as above, each of their neighbors has exactly 4 neighbors, and then each neighbor of one of those points has exactly 2 neighbors, and so on. We can continue this pattern forever, and we get a bipartite graph where the points alternate between having 2 or 4 neighbors. This gives the configuration shown in Figure \ref{fig:42_2}, which is a tiling of the plane with rhombuses with the midpoints of the edges added. 

This configuration is group balanced for any choice of $\angle P_1PQ$ between $60^\circ$ and $90^\circ,$ because there is a $180^\circ$ rotational symmetry about every point in $\mc{C}$, therefore it is balanced. Thus, the case $(4,2)$ works and gives a set of rhombus tilings where the points are the vertices of the tiles together with the midpoints of their edges.

\subsection{Case \texorpdfstring{$(3,3)$}{(3,3)}}

Consider the neighbors of $Q$ at distance $QP_1=QP_2=\sqrt{3}$. There are already two such neighbors to the left of $Q$ (namely, $P_1$ and $P_2$), so there must be at least two more to the right to balance them out. Any two consecutive neighbors must be separated by an angle of more than $30^\circ$ to be a distance at least 1 from each other, since if we have an isosceles triangle with a $30^\circ$ angle between two sides of length $\sqrt{3}$, the opposite side has length $2\sqrt{3}\cos 75^\circ \approx 0.90 < 1.$ This implies that there can be at most one more neighbor between $P_1$ and $Q_2$ (since $\angle P_1QQ_2 = 90^\circ$, at most two between $Q_2$ and $Q_1$ (since $\angle Q_2QQ_1 = 120^\circ$), and at most one between $Q_1$ and $P_2$ (since $\angle Q_1QP_2=90^\circ$), for a total of at most 6.

Next, we claim that $Q$ cannot have exactly 5 neighbors at this distance. If there were only one neighbor between $Q_2$ and $Q_1$, then there is no way the horizontal components can cancel, since $\vec{QP_1}+\vec{QP_2}$ already has horizontal component $-2<-\sqrt{3}$, and the remaining two neighbors must be either vertical or to the left of vertical, meaning their horizontal components have negative sum. Thus, there would have to be two neighbors between $Q_1$ and $Q_2$. But then if there was an additional neighbor above $PQ$ but not below, say, then the vertical components would have positive sum and so would not cancel. This shows that it is impossible for $Q$ to have 5 neighbors at distance $\sqrt{3}.$ 

Thus, in order for $Q$ to be balanced at distance $\sqrt{3}$, it must have either 2, 4, or 6 neighbors at that distance, and by analogous reasoning, $P$ must as well. Suppose now that they both have exactly 6 neighbors at this distance, as shown in Figure \ref{fig:33_1}. 

\begin{figure}[H]
    \centering
    \includegraphics[width=7cm]{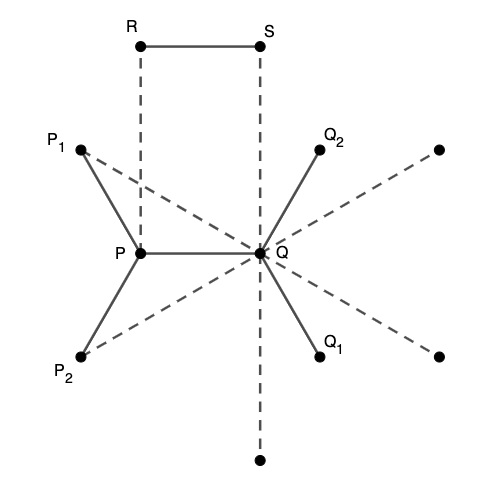}
    \caption{Neighbors of $Q$ at distance $\sqrt{3}$ in the case $(3,3)$}
    \label{fig:33_1}
\end{figure}

Let $R$ be the neighbor of $P$ above $PQ$ between $P_1$ and $Q_2$, and $S$ the neighbor of $Q$ above $PQ$ between these points. Then since $\angle RPQ, \angle SQP \le 90^\circ$, the only way $RS\ge 1$ is if $RP$ and $SQ$ are both perpendicular to $PQ$, in which case $RS=1$. By analogous reasoning, the neighbor of $Q$ below line $PQ$ between $P_2$ and $Q_1$ must line on line $QS$, and then the only way for the remaining points at distance $\sqrt{3}$ from $Q$ to balance out is if all 6 neighbors are evenly spaced in $60^\circ$ angles.

Now we know that either $P$ and $Q$ both have 6 evenly spaced neighbors at distance $\sqrt{3}$, or at least one (say $Q$) of them has 4 neighbors at distance $\sqrt{3}$. In either case, the reflections $P_1'$ of $P_1$ about $Q$ and $P_2'$ of $P_2$ about $Q$ must be in our set (Figure \ref{fig:33_2}). 

But then $P_2'$ is a neighbor of $Q_2$ (at distance 1) and $\angle QQ_2P_2'=120^\circ$, so $Q_1$ must have exactly 3 neighbors, since if it had 4, 5, or 6, then one of the other neighbors would be at a $60^\circ$ angle to $Q$ and so would also be a neighbor of $Q$. Similarly, $Q_1$ must have exactly 3 neighbors. It follows that $Q$ actually has exactly 6 neighbors at distance $\sqrt{3}$. From our earlier argument, this implies that either $P$ has 4 neighbors or 6 evenly spaced neighbors at distance $\sqrt{3}$, so the reflections of $Q_2$ and $Q_1$ about $P$ are also in our set, which in turn implies that $P_1$ and $P_2$ each have exactly 3 neighbors.

\begin{figure}[H]
    \centering
    \includegraphics[width=7cm]{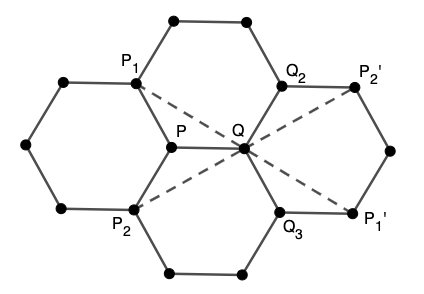}
    \caption{Regular hexagon tiling for the case $(3,3)$}
    \label{fig:33_2}
\end{figure}

We can continue in this manner to show that all points in our configuration have exactly 3 nearest neighbors, which forces to points to be the vertices of a regular hexagon tiling as shown in Figure \ref{fig:33_2}. There is no space for additional points inside one of these hexagons without being within distance 1 of some vertex, so this is the only configuration possible for this case.

\subsection{Case \texorpdfstring{$(3,2)$}{(3,2)}}

The points $P_1$ and $P_2$ must both be at a distance of $\sqrt{3}$ from $Q$, and they make a $60^\circ$ angle at $Q$. We claim that there must be either exactly 4 or exactly 6 points at this distance.
\begin{figure}[H]\label{fig:32no5}
    \centering
    \includegraphics[width=6cm]{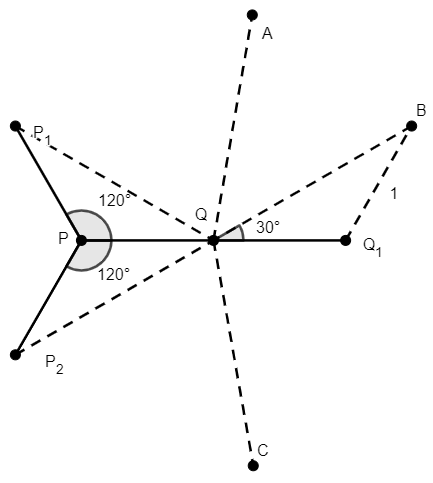}
    \caption{Case $(3,2)$ with 5 points at distance $\sqrt{3}$ from $Q$}
\end{figure}

There must be at least two other points at distance $\sqrt{3}$ from $Q$. If there are 3 other points like in Figure~\ref{fig:32no5}, then there must be either two above or two below the horizontal line $PQ_1$; suppose without loss of generality that there are two above, and we'll refer to the 3 points as $A,B,$ and $C$ as in the figure. Since $Q_1B \geq 1$, we know $\angle BQQ_1 \geq 30^\circ$. Then the vertical component of the vector $\vec{QB} + \vec{QA}$ is more than $\sqrt{3}$ because $A$ must be even higher than $B$. Then since the vectors $\vec{QP_1}$ and $\vec{QP_2}$ cancel vertically, we need the vertical component of $\vec{QC}$ to be even less than $-\sqrt{3}$, which is impossible since $QC = \sqrt{3}$. Therefore $Q$ does not have exactly 5 neighbors at distance $\sqrt{3}$.

Next, we claim that $Q$ cannot have 7 or 8 neighbors at distance $\sqrt{3}$. If so, there must be either 4 neighbors above line $PQ$ or 4 below, so assume there are 4 above, and that these points are $P_1, R, S,$ and $T$, in clockwise order. Then if we consider the balance about $R$ at distance $P_1R$, there is no space for $R$ to have more than 2 neighbors at this distance, because the next neighbor counterclockwise from $P_1$ would be too close to at least one of $P_1,P,$ $Q$, or $S$. Thus, the only possibility is that the reflection $P_1'$ of $P_1$ about $R$ is in our set. Similarly $S$ can only have one other neighbor at distance $ST$, so the reflection $T'$ of $T$ about $S$ is in our set. Then we must have $P_1'=T'$, since otherwise $P_1'$ and $T'$ would be too close to each other. But then there is only one possible location of $R, S,$ and $T$ satisfying these constraints, namely, the configuration shown in Figure \ref{fig:32_2} with $P_1R=RP_1'=P_1'S = ST\approx 1.02$. 
\begin{figure}[H]
    \centering
    \includegraphics[width=6cm]{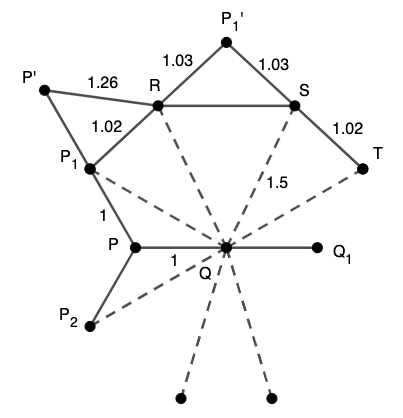}
    \caption{Case $(3,2)$ with 7 points at distance $\sqrt{3}$ from $Q$}
    \label{fig:32_2}
\end{figure}
Since we've already considered the cases $(5,3),$ $(4,3)$, and $(3,3)$, the only possibility is that $P_1$ has 2 neighbors at the minimal distance, so the reflection $P'$ of $P$ about $P_1$ is in our set. Then we get $P'R\ne RS$ (since $P'R\approx 1.26$ while $RS\approx 1.50$), but there is no space for another neighbor of $R$ at either of these distances except if it's above $P'R$ and $RP_1'$ without getting too close to one of $P_1, P, Q,$ or $S$. This makes it impossible for $R$ to be balanced at either of these distances, giving a contradiction. It follow that $Q$ cannot have 7 or more neighbors at distance $\sqrt{3}$.

The only remaining possibility is that $Q$ has 4 or 6 neighbors at distance $\sqrt{3}$. Now since we've already considered the cases $(5,3), (4,3)$, and $(3,3)$, we know that each of $P_1,P_2,P_1',$ and $P_2'$ can only have one other neighbor. Thus, by the same argument, $P_1$ also has either 4 or 6 neighbors at distance $\sqrt{3}$.

If both of them have 6 neighbors, then the only possibility is that both of them have neighbors evenly spaced at $60^\circ$ angles, since otherwise two of their neighbors would be too close together: Figure \ref{fig:32_3} shows the extreme case where the next neighbor $R$ of $P_1$ at distance $\sqrt{3}$ counterclockwise from $Q$ is as far as possible from the next neighbor $S$ of $Q$ at distance $\sqrt{3}$ clockwise from $P_1$, and we see in this case that $RS\approx 0.55<1$. We can compute this extreme case by making the angle $QP_1R$ as large as possible subject to the conditions that the components in the direction of $\vec{PP_1}$ cancel and that $\angle RP_1T$ is large enough to ensure that $RT>1.$

But then this common neighbor $R$ must also be a neighbor at distance $\sqrt{3}$ to the reflection $P_2'$ of $P_2$ about $Q$, and its reflection $R'$ about $Q$ must be a neighbor at distance $\sqrt{3}$ to point $P_2$ and to the reflection $P_1'$ of $P_1$ about $Q$, as shown in the second half of Figure \ref{fig:32_3}, so each of these points must also have 6 neighbors at distance $\sqrt{3}$. It then follows by symmetry that the neighbor (at distance 1) on the other side of each of these points must itself have 3 neighbors (at distance 1), and then each of that point's other neighbors must have 2 neighbors (at distance 1) and 6 neighbors at distance $\sqrt{3},$ and so on.

\begin{figure}[H]
    \centering
    \includegraphics[width=13cm]{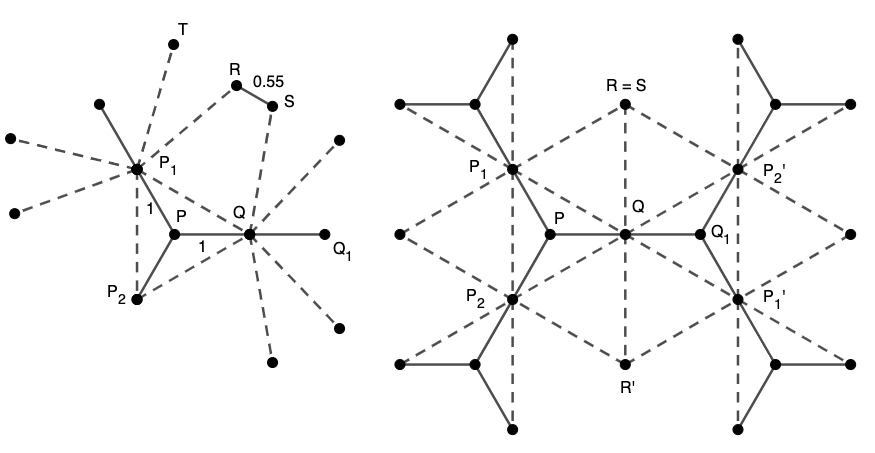}
    \caption{Case $(3,2)$ where both $P_1$ and $Q$ have 6 neighbors at distance $\sqrt{3}$}
    \label{fig:32_3}
\end{figure}

We can continue this process to get the second configuration shown in Figure \ref{fig:32_4}, which consists of a regular hexagon tiling where the hexagons have side length 2 and the points in our configuration are the vertices, midpoints of the edges, and centers of the hexagons. No other points can be added to this configuration without being within distance 1 of one of these points, so this is the only possible configuration in the case where both $Q$ and $P_1$ have 6 neighbors at distance $\sqrt{3}$.
\begin{figure}[H]
    \centering
    \includegraphics[width=16
    cm]{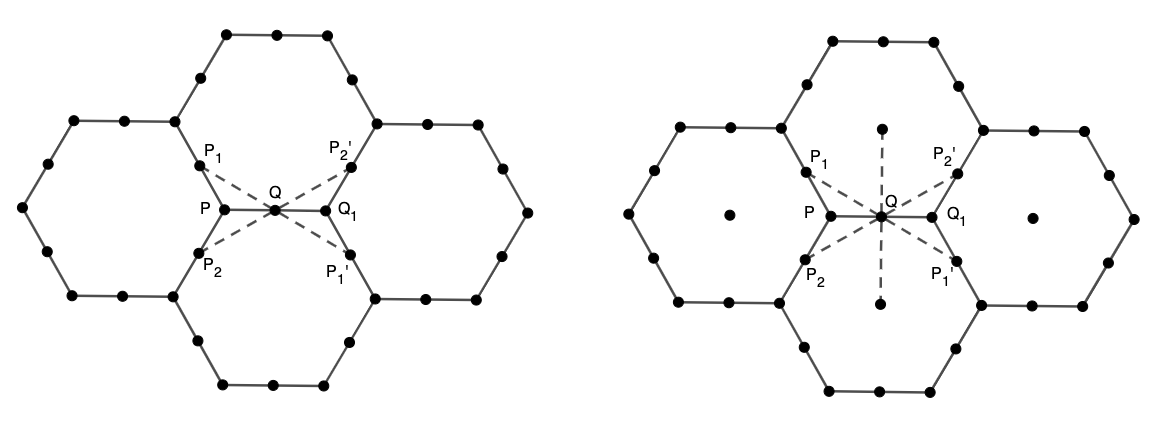}
    \caption{Regular hexagon tiling for the case $(3,2)$, with or without the centers of the hexagons}
    \label{fig:32_4}
\end{figure}
It remains to consider the case where at least one of $Q$ or $P_1$ has only 4 neighbors at distance $\sqrt{3}$. If $Q$ has 4 such neighbors, then the reflections $P_1'$ and $P_2'$ of $P_1$ and $P_2$ about $Q$ must be in $\mc{C}$. Then by symmetry, $P_1'$ and $P_2'$ are neighbors of $Q_1$ at distance 1, which implies that $Q_1$ has 3 such neighbor. This in turn implies that each of $P_1'$ and $P_2'$ has just one other neighbor.

If $P_1$ has 6 neighbors at distance $\sqrt{3}$, then let $R$ and $T$ be the next two such neighbors counterclockwise from $Q$. Now if $P_2'$ had 6 neighbors at distance $\sqrt{3}$, one of them would be too close to $R$, as shown in Figure \ref{fig:32_5}. Thus, $P_2'$ has only 4 neighbors at distance $\sqrt{3}$, so by the same argument as before, this implies that its other neighbor $R_1$ has exactly 3 neighbors at distance 2. Let $S_1$ be its next neighbor clockwise from $P_2'$. Then $S_1$ also cannot have 6 neighbors at distance $\sqrt{3}$, or else one of them would be too close to $R$, so it must have 4 such neighbors, and then the reflection $R_2$ of $S_1$ about $R$ must also have 3 neighbors for the same reason as before.

Continuing in this manner, we can see that there must be a regular hexagon $PQ_1R_1R_2R_3R_4$ as shown in Figure \ref{fig:32_5}, with all its vertices in $\mc{C}$ as well as the midpoints $P_1,Q,P_2',S_1,S_2,S_4$ of its edges. But then point $T$ will be too close to $S_3$, which forces $T=S_3,$ implying that all the neighbors of $P_1$ at distance $\sqrt{3}$ are actually evenly spaced at $60^\circ$ angles. But then $R$ would also be a neighbor of $Q$ at distance $\sqrt{3}$, which is a contradiction.

This shows that if either of $P_1$ or $Q$ has only 4 neighbors at distance $\sqrt{3}$, then actually both of them do. This implies that for every 2-neighbor point in our set, each of its neighbors is a 3-neighbor point, and since we know that every neighbor of a 3-neighbor point in the configuration is a 2-neighbor point, the configuration must contain all the vertices and edge midpoints of a regular hexagon tiling, as shown on the left in Figure \ref{fig:32_4}.

\begin{figure}
    \centering
    \includegraphics[width=12cm]{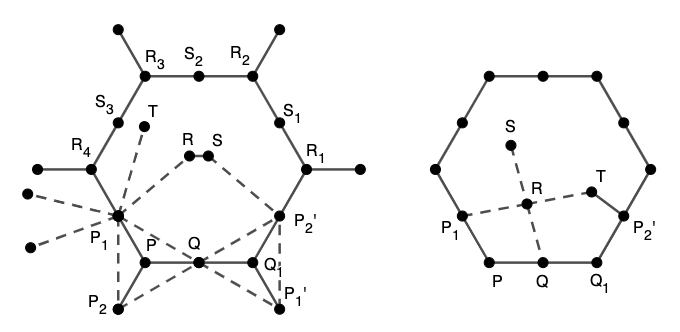}
    \caption{Showing the existence of hexagons and lack of other points inside them in the case $(3,2)$}
    \label{fig:32_5}
\end{figure}

It remains to show that there is no space for additional points inside the hexagons (unless we add the center of one hexagon, in which case we would be forced to add all the centers as explained above). To see this, suppose for contradiction there is another point $R$ inside the hexagon, and without loss of generality let $P_1$ and $Q$ be the two closest edge-midpoints of the hexagon to $R$. We must have $\angle P_1RQ>120^\circ,$ or else $RP<1.$ Thus, $R$ cannot have 3 neighbors inside the hexagon at distance $RP_1$ or $RQ$. There also is not space for it to have 5 or more neighbors at either distance without one of those neighbors getting too close to $P$ or to the other of $P_1$ or $Q$.

This implies that the reflections $S$ of $Q$ about $R$ and $T$ of $P_1$ about $R$ must be in $\mc{C}$. But if we assume $\angle RPQ \le \angle RPP_1,$ then $T$ will be too close to the edge-midpoint $P_2'$, because if $PP_1RQ$ were a rhombus, then we would have $TP_2'$, and making $QR$ larger than 1 and $\angle RPQ$ smaller than $60^\circ$ will only shrink $TP_2'$. This gives a contradiction, so we conclude that there is no space for more points inside the hexagons, and that the only possibilities for the case $(3,2)$ are the two hexagon tilings shown in Figure \ref{fig:32_4}.

\subsection{Case \texorpdfstring{$(2,2)$}{(2,2)}}

If $Q_1$ or $P_1$ had more than 2 neighbors, that would create an instance of one of the cases $(3,2), (4,2), (5,2),$ or $(6,2)$, and in each of those cases we determined that there were never two adjacent vertices that each had two neighbors. Therefore $Q_1$ and $P_1$ also have two neighbors, which must also be on the line $PQ$, and then repeating this reasoning tells us that we must have points all along the line $PQ$, with a distance of 1 between consecutive points. One possibility is that this is the entire configuration, but there could be more points.

Let $L$ be the line $PQ$ and $\mc C$ be our balanced configuration, which we will assume is not contained in $L$. Then let $d^* = \inf \{d(R,L) : R\in \mc C\setminus L\}$, where $d(R,L)$ is the distance from the point $R$ to any point on the line $L$. For any $\eps > 0$, there exists a point $R \in \mc C\setminus L$ such that $d(R,L) < d^*+\eps$. The line through $R$ perpendicular to $L$ intersects $L$ between two points in $\mc C$ at distance 1, which we will assume are $P$ and $Q$ without loss of generality. At least one of $\angle RPQ$ and $\angle RQP$ are greater than $60^\circ$; we suppose that is $\angle RPQ$. Now, to balance at $P$ with distance $PR$, we must have one or more other points at this distance. If there are two or four points at distance $PR$ from $P$, then the point $R'$, which is opposite from $R$ with respect to $P$, must be in the configuration. If there are three, then those points are at $120^\circ$ angles and one must be within $d^*$ of $L$, as long as $\eps < d^*/10$ for example. Otherwise, if there are five or more points on the circle of radius $PR$ centered at $P$, then at least three must be in the arc which is at least $d^*$ away from $L$, as depicted in Figure~\ref{fig:22_1}. The points at opposite ends of this arc are separated by a distance of less than $1+2\eps$, since $R$ is closer to $P$ than $Q$. For, say, $\eps < 1/10$ this is impossible without two of the points on the arc being less than 1 apart from each other, so in fact we must have $R' \in \mc C$.

\begin{figure}[H]
    \centering
    \includegraphics[width=16cm]{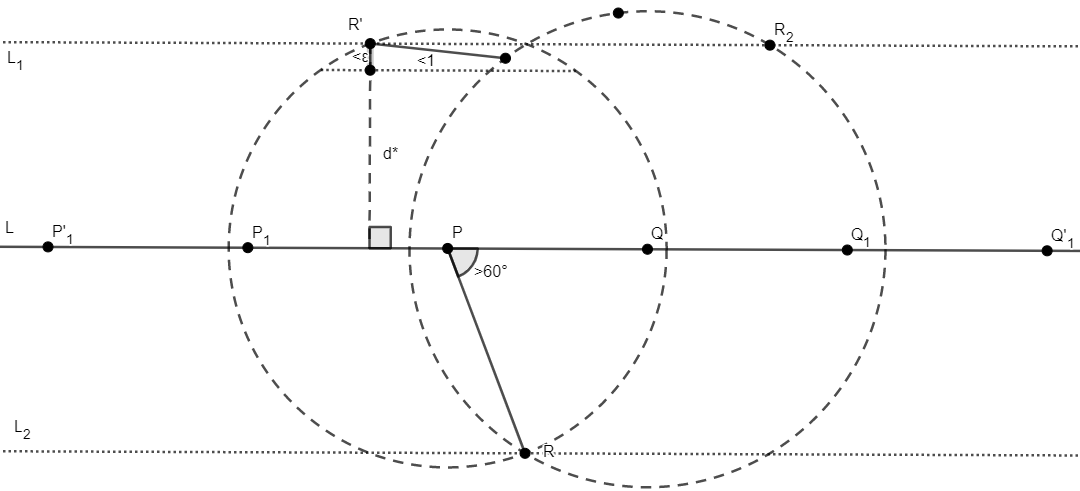}
    \caption{Case $(2,2)$, demonstrating that the points $R'=R_1$ and $R_2$ must be in the configuration.}
    \label{fig:22_1}
\end{figure}

Now, we can try to apply the same argument to the reflection of $R$ about $Q$, but then the points at opposite ends of the arc of points on the circle of radius $RQ$ about $Q$ which are at least $d^*$ away from $L$ is only bounded above by $2\eps + 2$. Then there may be room for three points at least 1 apart from each other on this arc. The leftmost of these points would then be less than 1 away from $R'$, which is impossible. 
Therefore the reflections of $R$ about $P$ and $Q$ both exist in $\mc C$. Call these points $R_1$ and $R_2$, respectively.

The points $R_1$ and $R_2$ are separated by a distance of 2 on the line $L_1$ which is parallel to $L$ at a distance less than $d^* + \eps$. Then we can repeat the same argument with $R_1$, which is between $P$ and $P_1$, to get a point $R_0$ which is the reflection of $R_1$ about $P_1$. Similarly we reflect $R_2$ about $Q_2$ to get $R_3$. Then $R_0,R,$ and $R_3$ are all on another line $L_2$ parallel to $L$, and are also separated by a distance of 2. Repeating this reasoning tells us that the lines $L_1$ and $L_2$ must both have points separated by distances of 2 going off in both directions.

The lines $L_1$ and $L_2$ are each at a distance at most $d^* + \eps$ and at least $d^*$ from $L$. If there are no points at distance exactly $d^*$ from $L$, then there must exist a point $X$ at distance at most $d^* + \eps/3$ from $L$ and another point $Y$ at a distinct distance at most $d^* + \eps/9$ away from $L$. By the same reasoning, there must be points in the configuration all along the lines through $X$ and $Y$ parallel to $L$, separated by a distance of 2. Then we can draw a $2.1$-by-$\eps$ box containing four points in the configuration, which is impossible without two of the points being at a distance less than 1 from each other. Therefore there is a point at distance exactly $d^*$ from $L$, and we can assume these points are on the lines $L_1$ and $L_2$, so that there are no points in the strip between $L_1$ and $L$ and between $L$ and $L_2$. Moreover, there cannot be any points less than $2d^*$ away from $L$, other than those on $L,L_1,L_2$, because a point less than $d^*$ away from $L_1$ (for example) would necessitate a point in the strip between $L_1$ and $L$ when we balance about a point in $L_1$.

Now, we return to the point $R$ on $L_2$ which is $d^*$ away from $L$. The only point within $2d^*$ of $L$ which is at a distance $PR$ from $R$ is $P$ itself, so there must be another point more than $2d^*$ away from $L$ which is $PR$ from $R$. There can only be one such point, otherwise the average of the points at $PR$ from $R$ would be further from $L$ than $R$. Therefore the reflection of $P$ about $R$ must be in the configuration, since otherwise we would not be balanced at $R$ with distance $PR$. The same argument shows that the reflection of $Q$ about $R$ is in the configuration, and similarly for all the other points on $L_2$. These reflections create another line $L_4$, where there are points in the configuration spaced at distance 1 from each other. There is an analogous line $L_3$ on the opposite side of $L$. The lines $L_3$ and $L_4$ are at a distance of exactly $2d^*$ from $L$, so just like before, we know that all points at a distance of less than $3d^*$ much be on one of the lines $L,L_1,L_2,L_3,L_4$, since a closer point would force a new point less than $2d^*$ away from $L$, by balancing about a point in $L_3$ or $L_4$.

The lines $L_3$ and $L_4$ are in the same situation as the line $L$ was to begin with; they contain points in the configuration separated by a distance of 1 all along them, and the nearest point is at a distance $d^*$. Then the entire string of reasoning applies repeatedly, to produce infinitely many lines $L_i$ separated at distance $d^*$ from each other, with points spaced at distance 1 along them when $i$ is congruent to $0$ or $3$ modulo 4 and spaced at distance 2 when $i$ is congruent to $1$ or $2$ modulo 4. These lines cover the entire plane, leaving room for additional points in the configuration only halfway between consecutive points on the lines with points spaced at distance 2. If one of these halfway points exists, then all of them in the entire plane must, as this point can be repeatedly reflected about the nearest points on its line and the neighboring lines to get to any other halfway point.

We are left with three possibilities. The configuration $\mc C$ could be entirely contained in the line $L$, which is balanced when the points are evenly spaced on $L$. If there is any point not on $L$, then at a minimum there must be infinitely many lines of points evenly spaced at distances 1 and 2, alternately. The points on lines with points spaced at distance 1 form the points of a lattice, and the points on the lines spaced at distance 2 are the midpoints of the edges of that lattice. Any lattice together with its edge midpoints comes up in this form, and it is a balanced configuration. Finally, we can add in the points halfway between consecutive points spaced at distance 2, which is like adding in the centers of the lattice tiling as well, or alternatively like taking the points of the same lattice tiling with generators half as large. This configuration is also balanced, and we have shown that these are the only possible balanced configurations in the case (2,2).

\begin{figure}[H]
    \centering
    \includegraphics[width=8cm]{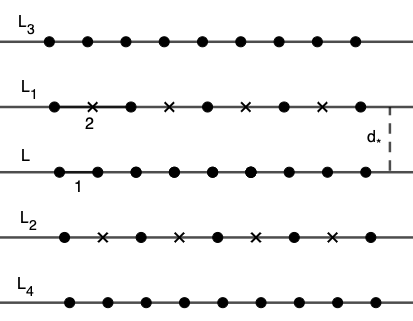}
    \caption{The possible balanced configurations not contained in $L$, with optional halfway points marked.}
    \label{fig:22_result}
\end{figure}

\newpage

\section{Balanced configurations in hyperbolic space}\label{sec:hyperbolic}

The notions of balanced and group-balanced configurations in spherical and Euclidean space can naturally be extended to $n$-dimensional hyperbolic space $\mb{H}^n$ as follows, using a definition analogous to the one suggested in Remark \ref{rem:alternate_def} for the spherical case:

\begin{definition}
Given $\mc{C}\se \mb{H}^n$, take a conformal embedding of $\mb{H}^n$ in some higher dimensional Euclidean space, and for each $P\in \mc{C}$, consider the $n$-dimensional Euclidean hyperplane tangent to $\mb{H}$ at $P$. Then for each distance $d\ge 0$, let $P_1,\dots,P_m\in \mc{C}$ be the neighbors of $P$ at distance $d$, and let $\vec{PP_1'},\dots,\vec{PP_m}'$ be equal length vectors in this hyperplane such that $\vec{PP_i'}$ is tangent to the hyperbolic geodesic $PP_i'$ for each $i$. Then $\mc{C}$ is \emph{balanced} if $\vec{PP_1'}+\dots+\vec{PP_m'}=\vec{0}$ for every $P\in\mc{C}$ and every $d\ge 0.$
\end{definition}

\begin{definition}
A configuration $\mc{C}\se\mb{H}^n$ is \emph{group-balanced} if for any $P\in\mc{C}$, there is a symmetry of $\mc{C}$ fixing no points of $\mb{H}^n$ except $P$.
\end{definition}

For the case $n=2$, one can construct two families of discrete group-balanced configurations in $\mb{H}^2$ which are analogous to those described in Theorems \ref{thm:leech_classification} and \ref{thm:classification} for the spherical and Euclidean cases.

\subsection{Configurations in \texorpdfstring{$\mb{H}^2$}{H2} analogous to regular polygon tilings}

Let $p,q,$ and $r$ be any positive integers with $1/p+1/q+1/r<1.$ Then up to isometry, there is a unique triangle in $\mb{H}^2$ with angles $180^\circ/p,180^\circ/q,$ and $180^\circ/r$, since the angle sum of that triangle is less than $180^\circ$. One can construct a tiling of $\mb{H}^2$ using triangles of this shape by repeatedly reflecting each triangle in the tiling across all three of its edges. Then there are three distinct types of vertices in this tiling:
    \begin{itemize}
        \item The points at which $2p$ of the vertices with angle $180^\circ/p$ meet, 
        \item The points at which $2q$ of the vertices with angle $180^\circ/q$ meet, 
        \item The points at which $2r$ of the vertices with angle $180^\circ/r$ meet.
    \end{itemize}
One can construct a group-balanced configuration in $\mb{H}^2$ by taking any one, two, or all three of these sets, since then there will be either a $p$-fold, $q$-fold, or $r$-fold rotational symmetry about that point. The symmetry group fixing each point is one of the three dihedral groups $D_p$, $D_q$, or $D_r$, depending which of the three categories the point is in, and the symmetry group of the full tiling is the Coxeter group whose Coxeter-Dynkin diagram is a triangle with edges labeled $p,q,$ and $r$ and vertices corresponding to the reflections across the three different types of edges in the tiling.

\subsubsection*{Corresponding configurations in the spherical case}

In the spherical case, we can get an analogous set of tilings for any triple $(p,q,r)$ with $p,q,r\ge 2$ and $1/p+1/q+1/r>1$, since any spherical triangle must have angle sum more than $180^\circ$. For the triples $(2,2,n)$, we get the configurations coming from $n$ equally spaced point along some great circle, with the 2's corresponding to these vertices and the edge midpoints, and the $n$ to the two antipodal points. For $(2,3,3)$ we get the configurations coming from a regular tetrahedron (where the 2 corresponds to the edge midpoints, the 3 to the vertices, and the 3 to the face centers). For $(2,3,4)$, we get the configurations coming from a regular octahedron (or equivalently, its dual cube), with the 2 corresponding to the edge midpoints, the 3 to the cube vertices, and the 4 to the octahedron vertices. Similarly, for $(2,3,5)$ we get the balanced configurations coming from a regular icosahedron (or its dual dodecahedron), with the 2 corresponding to the edge midpoints, the 3 to the dodecahedron vertices, and the 5 to the icosahedron vertices. Thus, all balanced configurations on the sphere fall under this category.

\subsubsection*{Corresponding configurations in the Euclidean case}

In the Euclidean case, we get tilings corresponding to the triples $(p,q,r)$ with $1/p+1/q+1/r$, since any Euclidean triangle must have angle sum exactly $180^\circ.$ For the case $(2,3,6)$, we recover the regular hexagon tilings from Theorem \ref{thm:classification}: each hexagon is made up of 12 of these tiles, with the 2 corresponding to the edge midpoints, the 3 to the vertices, and the 6 to the face centers. We also get the cases $(2,4,4)$ and $(3,3,3)$, giving the tilings of the plane with squares and equilateral triangles, but these do not need to be listed as separate cases in Theorem \ref{thm:classification} because the resulting configurations already fall under the lattice parallelogram case.
    
\subsection{Configurations in \texorpdfstring{$\mb{H}^2$}{H2} analogous to lattices}

Consider a triangle with arbitrary angles $\alpha,\beta,$ and $\gamma$ summing to $360^\circ/m$ for some positive integer $m\ge 3.$ One can tile $\mb{H}^2$ with triangles of this shape by repeatedly rotating each triangle in the tiling about the midpoint of each of its edges. All vertices of this tiling look the same, with a total of $3m$ angles about each point in the pattern $\alpha,\beta,\gamma,\alpha,\beta,\gamma,\dots,$ so that there is an $m$-fold rotational symmetry about each vertex. Then we can consider the following four sets of points:
\begin{itemize}
    \item The vertices of the tiling,
    \item The midpoints of the edges between angles of sizes $\alpha$ and $\beta,$
    \item The midpoints of the edges between angles of sizes $\alpha$ and $\gamma,$
    \item The midpoints of the edges between angles of sizes $\beta$ and $\gamma.$
\end{itemize}
One can construct a group-balanced configuration in $\mb{H}^2$ by taking any one, two, three, or all four of these sets, since there is an $m$-fold rotational symmetry about each vertex and a 2-fold rotational symmetry about each edge midpoint.

\subsubsection*{Corresponding configurations in the Euclidean case}
    
These are analogous to lattice parallelograms tilings in the planar case. We require $m\ge 3$ in the hyperbolic case to have angle sum less than $180^\circ$, but in the planar case we would have $m=2$, and this leads to the lattice parallelogram tilings, where we take pairs of adjacent triangles in the tiling to form the parallelograms. Then the vertices of the triangles correspond to the vertices of the parallelograms, two of the types of edge midpoints correspond to two of the types of edges of the parallelogram tiles, and the third type of edge midpoints (namely, the midpoints of the sides along which we join the triangles to form the parallelograms) corresponds to the centers of the faces of the parallelograms. Note that given a scalene triangle and a lattice spanned by two of its edge vectors, we could construct the parallelograms in three different ways (by joining the triangles in the tiling along any of the three types of edges) but we would get the same balanced configurations in each case since we are just changing which points are considered edge midpoints and which are considered face centers.

\subsection{Open questions for the hyperbolic case}

We can pose the following questions, similar to those posed in \cite{cohn2010} for the Euclidean case:

\begin{question}
Is this the full set of discrete group-balanced configurations in $\mb{H}^2$, or are there others?
\end{question}

\begin{question}
Are all discrete balanced configurations in $\mb{H}^2$ group-balanced?
\end{question}

\begin{question}
Is there some positive integer $n$ such that there is a discrete configuration in $\mb{H}^n$ which is balanced but not group-balanced?
\end{question}

It is possible the $n=2$ case could be resolved using a case analysis similar to ours and Leech's \cite{Leech}, but it would likely require a much more involved argument, as there would be infinitely many cases to consider.

\section{Acknowledgements}

We thank Henry Cohn for the problem suggestion and many helpful conversations related to this work.

\bibliographystyle{plain}
\bibliography{references}

\end{document}